\newcommand{\stsets}[1]{\mathbb{#1}}
\newcommand{\R}{\stsets{R}}
\newcommand{\N}{\stsets{N}}
\theoremstyle{plain}
\newtheorem{theorem}{Theorem}[section]
\newtheorem{lemma}[theorem]{Lemma}
\newtheorem{assumption}[theorem]{Assumption}
\newtheorem{definition}[theorem]{Definition}
\theoremstyle{remark}
\newtheorem{example}[theorem]{Example}
\renewcommand{\P}{\mathbf{P}}
\DeclareMathOperator{\E}{{\bf E}}
\DeclareMathOperator{\one}{{ 1\hspace*{-0.55ex}I}}
\DeclareMathOperator{\card}{card}
\DeclareMathOperator{\Supp}{Supp}
\DeclareMathOperator*{\argmax}{arg\,max}
\DeclareMathOperator{\TLLN}{TLLN}
\DeclareMathOperator{\Cset}{\mathcal{C}}
\newcommand*\diff{\mathop{}\!\mathrm{d}}
\newcommand{\eps}{\varepsilon}
\renewcommand{\P}{\mathbf{P}}
\renewcommand{\epsilon}{\varepsilon}
\renewcommand{\phi}{\varphi}
\newlength{\querylen}
\def\acknowledgementsname{Acknowledgments}
\newenvironment{acks}[1][\acknowledgementsname]{\section*{#1}}{\par}
    \renewenvironment{acks}[1][\acknowledgementsname]%
        {%
            \vskip0.5\baselineskip
            \small
            {\noindent\normalfont\sffamily\bfseries\acknowledgementsname}\par
            \begingroup\parindent 0pt\parskip 0.5\baselineskip
        }%
        {\endgroup}
\def\fundingname{Funding}
\newenvironment{funding}[1][\fundingname]{\begin{acks}[\fundingname]}{\end{acks}}
\title{Replica-mean-field limit of continuous-time fragmentation-interaction-aggregation processes}
\author{Michel Davydov \thanks{Division of Applied Mathematics, Brown University, Providence, RI, USA, \href{mailto:michel_davydov@brown.edu}{michel\_davydov@brown.edu}} }
\date{}
\begin{document}

\maketitle
\begin{abstract}
Many phenomena can be modeled as network dynamics with punctuate interactions. However, most relevant dynamics do not allow for computational tractability. To circumvent this difficulty, the Poisson Hypothesis regime replaces interaction times between nodes by independent Poisson processes, allowing for tractability in several cases, such as intensity-based models from computational neuroscience. This hypothesis is usually only conjectured, or numerically validated. In this work, we introduce a class of processes in continuous time called continuous fragmentation-interaction-aggregation processes, by analogy with previously introduced processes in discrete time. The state of each node, described by the stochastic intensity of an associated point process, aggregates arrivals from its neighbors and is fragmented upon departure. We consider the replica-mean-field version of such a process, which is a physical system consisting of randomly interacting copies of the network of interest. Generalizing results proved in discrete time and in the particular case of excitatory intensity-based neural dynamics, we prove that the Poisson Hypothesis holds at the limit of an infinite number of replicas.
\end{abstract}
\section{Introduction}

\subsection*{Motivation}
Many phenomena of interest, in the natural sciences or elsewhere, can be modeled as punctuate interactions between agents on an underlying network. Whether it be neural computations \cite{Galves_2013} \cite{Baccelli_2019}, opinion dynamics \cite{Amblard_2004}, epidemics propagation \cite{Pastor_Satorras_2015} or wireless communications \cite{Abishek17}, a natural way to model the evolution in time of a population of agents or nodes is to consider the times at which interactions happen at each node as the realization of a point process on the real line. The phenomena are then described through a system of stochastic differential equations satisfied by the stochastic intensities of the point processes. From the point of view of a single node, its state evolves in the following fashion: it aggregates arrivals from its neighbors, possibly with weights (these weights can be signed, for example, to model excitatory or inhibitory inputs in neural models), and in the event of a departure, its state is updated accordingly (it can for example decrease by one if we are interested in modeling services in queueing models, be reset to a resting state to mimic spiking in neural networks, be divided by two to model cellular division). In addition, we allow for the presence of a continuous drift, for example to model a refractory period after spiking in neuroscience models.

This flexibility allows for an accurate description of a wide variety of phenomena, but this accuracy comes at a price, namely, tractability, as neither the equations themselves, nor their associated functionals such as moment generating functions, admit closed forms except for some very particular cases. 

As such, a common approach is to simplify the model by neglecting certain characteristics of the phenomenon, such as considering agents to be independent, and/or considering a particular scaling of the system, typically removing finite-size effects. One classical simplification is the mean-field regime, obtained by letting the number of agents go to infinity and scaling the interactions accordingly, usually inversely proportionally to the number of agents, thus averaging interactions across the system \cite{Fournier_Locherbach_2016} \cite{LeBoudec07}. The resulting equation, common to all agents and usually of McKean-Vlasov type, often allows for closed forms to be obtained, at the cost of losing correlations between particles and the geometry of the underlying network.

In recent years, different approaches have been developed to circumvent these limitations. To incorporate heterogeneity, the properties of graphons (large dense graphs) have been used to derive new limit equations \cite{ZAN_2022} \cite{jabin2022}. In this setting, the limit object is an infinite system of ODEs. Another approach circumvents mean-field models altogether, relying instead on conditional independence properties and local weak limits to obtain local convergence \cite{Ramanan_2020}.

Another approach to obtain closed forms is called the Poisson Hypothesis. Popularized by Kleinrock for large queueing systems \cite{Kleinrock_1}, it states that the flow of arrivals to a given node can be approximated by a Poisson flow with rate equal to the average rate of the original flow of arrivals. In agent-based models, the flow of arrivals corresponds to the effect of interactions on a given node. Under the Poisson Hypothesis, the behavior of each agent is still described by a stochastic differential equation, but the agents are considered independent and interaction times are replaced by Poisson process clocks, which in certain models allows for tractability. This regime has been studied for queueing models by Rybko, Shlosman and others \cite{Vladimirov_2018} and by Baccelli and Taillefumier for intensity-based models from computational neuroscience \cite{Baccelli_2019}. 

A point of interest is the construction of physical models that, when properly scaled, converge to the Poisson Hypothesis regime, analogously to the classical mean-field construction. The replica-mean-field construction has been shown to be a successful answer to this question in various settings. This approach consists in building a new physical system comprising randomly interacting copies of the original network, and then letting the number of copies, or replicas, to go to infinity. From the point of view of a single node inside one of the replicas, its state evolves in a similar fashion to that of the original model, except that arrivals are now aggregated across neighbors in all the replicas. When there is a departure from the node, for each of its neighbors, an independent routing gives the index of the replica in which the neighbor will aggregate the arrival to its state. 

As the probability of two nodes interacting scales inversely proportionally to the number of replicas, the replicas become asymptotically independent when their number goes to infinity. Additionally, the aggregated arrivals to a given node can be seen as a (random) sum of rare events, which heuristically gives rise to a Poisson process at the limit in the number of replicas. These observations give an informal idea of how  it has been shown in the particular case of intensity-based neural dynamics that when the number of replicas goes to infinity, the dynamics of a typical replica converge to those of under the Poisson Hypothesis. This has been proved in both continuous and discrete time for excitatory neural dynamics \cite{bdt_2022}\cite{Davydov2022}. In the discrete time framework, a class of discrete-time processes, called fragmentation-aggregation-interaction processes, or FIAPs for short, for which the same limit theorem holds, has been introduced \cite{bdt_2022}. A natural question, and the aim of this work, is to introduce an analogous class of processes in continuous time and to extend the convergence result for replica-mean-field versions of such processes obtained in the specific case of excitatory neural dynamics.

The rest of the work is organized as follows: hereafter, we introduce the class of continuous-time fragmentation-interaction-aggregation processes and state the main result. Section 2 pertains to the proof of the result. Section 3 establishes a link between continuous-time and discrete-time FIAPs in the particular case of excitatory neural dynamics. 
\subsection*{Continuous-time fragmentation-interaction-aggregation processes}

Let $p>0.$ Given a point process $\mathbf{N}=(N_1,\ldots,N_p)$ on $\R^p,$ we introduce the network history  $(\mathcal{F}_t)_{t\in \mathbb{R}}$ as an increasing collection of $\sigma$-fields such that
\begin{equation*}
\mathcal{F}_t^\mathbf{N}=\{\sigma(N_1(B_1),...,N_K(B_K))|B_i\in \mathbf{B}(\mathbb{R}), B_i \subset (-\infty,t]\}\subset \mathcal{F}_t,
\end{equation*}
where $\mathcal{F}_t^\mathbf{N}$ is the internal history of the process $\mathbf{N}$.

For $1\leq i\leq p$, the $\mathcal{F}_t$-stochastic intensity $\{\lambda_i(t)\}_{t \in \mathbb{R}}$ of the associated point process $N_i$ is the $\mathcal{F}_t$-predictable random process satisfying for all $s<t \in \mathbb{R}:$
\begin{equation}
\label{eq_stoch_int_cfiap}
\E\left[N_i(s,t]|\mathcal{F}_s\right]=\E\left[\int_s^t \lambda_i(u)\diff u\big|\mathcal{F}_s\right].
\end{equation}
We will hereafter refer to \eqref{eq_stoch_int_cfiap} as \textit{the stochastic intensity property}. See \cite{Bremaud20} for more details on point processes admitting stochastic intensities.

Fragmentation-aggregation-interaction processes, or FIAPs for short, are processes with specific stochastic intensities, characterized below:
\begin{definition}
\label{def_cFIAP}
Let $K\geq 2.$ We define a \textit{continuous-time fragmentation-interaction-aggregation process} with parameter functions $(h_{j\rightarrow i})_{1\leq i\leq K, j \neq i}$, $(g_i)_{1\leq i\leq K}$ and $f$, hereafter referred to as a cFIAP$(f,g,h)$, as a collection of point processes $(N_i)_{1\leq i\leq K}$ on $\R$, hereafter referred to as the \textit{state processes}, where each $N_i$ admits a stochastic intensity $\lambda_i$ with respect to the network history such that for any $t\in \R^+,$ there exist
\begin{itemize}
    \item[$\bullet$]point processes $(\widehat{N}_{j\rightarrow i})_{1 \leq i \leq K, j \neq i}$ admitting stochastic intensities $(\widehat{\lambda}_{j \rightarrow i})$ hereafter referred to as \textit{interaction processes};
    \item[$\bullet$] measurable functions $(h_{j\rightarrow i})_{1\leq i\leq K, j \neq i}:\R\mapsto \R$ hereafter referred to as \textit{interaction functions} such that there exists $H>0$ that satisfies $|h_{j\rightarrow i}(t)|\leq H$ for all $i,j$ and all $t \in \R;$
    \item[$\bullet$]measurable functions $(g_i)_{1\leq i\leq K}:\R \mapsto \R^+$ and $(\sigma_i)_{1\leq i\leq K}:\R \mapsto \R^+$hereafter referred to as \textit{autonomous evolution functions};
    \item[$\bullet$]a Lipschitz function $f:\R\mapsto \R^+$ such that $f(0)=0$; 
\end{itemize}
such that for all $1\leq i\leq K$ and $t\in \R^+,$
\begin{equation}
\label{eq_cFIAP}
\begin{split}
\lambda_i(t)&=\lambda_i(0)+f\left(\sum_{j\neq i}\int_0^t h_{j\rightarrow i}(s)\widehat{N}_{j_\rightarrow i}(\diff s)\right)+\int_0^t(g_i(s,\lambda_i(s))-\lambda_i(s))N_i(\diff s)\\
&+\int_0^t(\sigma_i(s,\lambda_i(s))-\lambda_i(s))\diff s.
\end{split}
\end{equation}
\end{definition}
We will omit the parameters $f,g,h$ in the notation for cFIAPs when there is no ambiguity.
The dynamics \eqref{eq_cFIAP} reproduce in continuous time the dynamics of FIAPs defined in discrete time in earlier work \cite{bdt_2022}. The first term represents the \textit{agregation} of \textit{interactions} from the other nodes to the state of the $i-$th node, whereas the two last terms represent the autonomous evolution of the node as well as the change in its state when it is \textit{fragmented}.

We now formalize the Poisson Hypothesis.
\begin{definition}
\label{def_cFIAP_PH}
We say that a cFIAP satisfies the Poisson Hypothesis if all interaction times, namely all points of interaction processes defined in Definition \ref{def_cFIAP}, are given by independent Poisson processes. We denote with tildes all state processes in this regime. Namely, for all $i,j \in 1\leq i\leq K$ with $i \neq j, \hat{N}_{j\rightarrow i}$ are independent Poisson processes with intensities $s\rightarrow\E[\tilde{\lambda}_j(s)]$ where $\tilde{\lambda}_j$ is the stochastic intensity of the process $\tilde{N}_j$ with respect to the network history and for all $t\in \R^+$,
\begin{equation}
\label{eq_cFIAP_PH}
\begin{split}
\widetilde{\lambda}_i(t)&=\widetilde{\lambda}_i(0)+f\left(\sum_{j\neq i}\int_0^t h_{j\rightarrow i}(s)\hat{N}_{j\rightarrow i}(\diff s)\right)+\int_0^t(g_i(s,\widetilde{\lambda}_i(s))-\widetilde{\lambda}_i(s))\widetilde{N}_i(\diff s)\\
&+\int_0^t(\sigma_i(s,\widetilde{\lambda}_i(s))-\widetilde{\lambda}_i(s))\diff s.
\end{split}
\end{equation}
\end{definition}

Given a cFIAP, we now aim to define its replica-mean-field (RMF) version.

\begin{definition}
\label{def_cFIAP_RMF}
Let $K,M\geq 2.$ The $M-$replica-mean-field cFIAP$(f,g,h)$ is given by the collection of point processes $(N^M_{m,i})_{1\leq i\leq K, 1\leq m\leq M}$ admitting stochastic intensities $(\lambda^M_{m,i})$ such that for any $t\in \R^+,$ there exist
\begin{itemize}
    \item[$\bullet$]point processes $(\widehat{N}_{n,j \rightarrow i})_{1 \leq i \leq K, j \neq i, 1 \leq n \leq M}$ admitting stochastic intensities $(\widehat{\lambda}_{n,j,i})$ hereafter referred to as \textit{aggregation processes};
    \item[$\bullet$]a Lipschitz function $f:\R\mapsto \R^+$ such that $f(0)=0$;
    \item[$\bullet$] $(\mathcal{F}_t)$-predictable \textit{routing} processes $\{V^M_{(m,i)\rightarrow j}(t)\}_{t\in \mathbb{R}}$  for $1 \leq m \leq M, 1 \leq i,j \leq K,$ such that for each interaction time $T^M_{m,i}$, i.e., each point of the process $\hat{N}^M_{(m,i)\rightarrow j}$, the real-valued random variables $\{V^M_{(m,i)\rightarrow j}(T^M_{m,i})\}_j$ are mutually independent, independent from the past, i.e. from $\mathcal{F}_s$ for $s<T^M_{m,i},$ and uniformly distributed on $\{1,...,M\}\setminus\{m\}$
\end{itemize}
such that for all $1\leq m\leq M, 1\leq i\leq K$ and all $t\in \R^+,$
\begin{equation}
\label{eq_RMF_cFIAP}
\begin{split}
\lambda^M_{m,i}(t)&=\lambda^M_{m,i}(0)+f\left(\sum_{j\neq i}\sum_{n \neq m}\int_0^t h_{j\rightarrow i}(s)\one_{\{V^M_{(n,j)\rightarrow i}(s)=m\}}\hat{N}^M_{n,j\rightarrow i}(\diff s)\right)\\
&+\int_0^t(g_i(s,\lambda^M_{m,i}(s))-\lambda^M_{m,i}(s))N^M_{m,i}(\diff s)+\int_0^t(\sigma_i(s,\lambda^M_{m,i}(s))-\lambda^M_{m,i}(s))\diff s.
\end{split}
\end{equation}
\end{definition}

Hereafter, we will usually omit the superscript $M$ when referring to the RMF dynamics. 

Moreover, we will always assume that $\hat{N}^M_{n,j\rightarrow i}=N^M_{n,j}$ for all $1\leq n\leq M, 1\leq j\leq K$. We have defined it in all generality here for coherence with the definition of the Poisson Hypothesis.

We will moreover always consider the following assumptions on the functions $g_i$ and $\sigma_i$:

\begin{assumption}
\label{Ass_main}
For all $s,t \in \R,$ for all $i \in \{1,\ldots,K\},$ 
\begin{equation*}
g_i(s,t)\leq t \textrm{ and } \sigma_i(s,t)\leq t.
\end{equation*}
\end{assumption}

In particular, we have
\begin{equation*}
g_i(s,\lambda_i(s))\leq \lambda_i(s) \textrm{ and } \sigma_i(s,\lambda_i(s))\leq \lambda_i(s).
\end{equation*}
Note that this implies that the state processes $\lambda_i$ are always decreasing in between aggregations.

In a similar fashion to \cite{Davydov2022}, we also require the following assumption on the initial conditions:
\begin{assumption}
\label{Ass_2_cFIAP}
There exists $\xi_0>0$ such that for all $1\leq m \leq M, 1 \leq i \leq K$ and all $0<\xi\leq \xi_0$, $\E[e^{\xi\lambda_{m,i}(0)}]<\infty$.
\end{assumption}

\subsection*{Examples of continuous-time FIAPs}

We now give a few instances of specific FIAPs.

\begin{example}
\label{ex_GL}
Taking for $1\leq i,j\leq K$ and all $t\in \R,$\\
$h_{j\rightarrow i}(t)=\mu_{j\rightarrow i}\geq 0, f(t)=|t|, g_i(t, \lambda_i(t))=r_i>0, \sigma_i(t,\lambda_i(t))=b_i>0,$ we retrieve the excitatory Galves-Löcherbach model  for neural dynamics \cite{Galves_2013} \cite{Baccelli_2019}.
\end{example}
\begin{example}
\label{ex_GL_inh}
Taking for $1\leq i,j\leq K$ and all $t\geq 0,$\\
$h_{j\rightarrow i}(t)=\mu_{j\rightarrow i}\in \R, f(t)=\max(0,|t|), g_i(t, \lambda_i(t))=r_i>0, \sigma_i(t,\lambda_i(t))=b_i>0,$ we obtain a more general Galves-Löcherbach model for neural dynamics incorporating inhibition.
\end{example}
\begin{example}
\label{ex_GN}
Taking for $1\leq i,j\leq K$ and all $t\geq 0,$ \\
$h_{j\rightarrow i}(t)=\one_{\{j=i+1 \mod K\}}, f(t)=|t|, g_i(t, \lambda_i(t))=\lambda_i(t)-1, \sigma_i(t)=\lambda_i(t),$ we obtain a continuous-time concatenation queueing network. Note that when the Poisson Hypothesis holds, such a network is an instance of a Gordon-Newell queueing network \cite{Kleinrock_2}.
\end{example}
\subsection*{The main result}
Recall the following definition of convergence in total variation:

\begin{definition}
\label{def_tv}
Let $P$ and $Q$ be two probability measures on a probability space $(\Omega, \mathcal{F})$. We define the total variation distance by
\begin{equation*}
d_{TV}(P,Q)=\sup_{A\in \mathcal{F}}|P(A)-Q(A)|.
\end{equation*}
When $\Omega$ is countable, an equivalent definition is
\begin{equation*}
d_{TV}(P,Q)=\frac{1}{2}\sum_{\omega \in \Omega}|P(\omega)-Q(\omega)|.
\end{equation*}
A sequence of random variables is said to converge in total variation when the corresponding sequence of their distributions does.
\end{definition}

The following theorem is the main result of this work:

\begin{theorem}
\label{th_Poisson_CV_cFIAP}
There exists $T\in \R^+$ such that for $t \in [0,T]$, if
\begin{equation*}
A^M_{m,i}(t)=\sum_{j \neq i} \sum_{n \neq m}  \int_0^t h_{j\rightarrow i}(s) \one_{\{V^M_{(n,j)\rightarrow i}(s)=m\}} N^M_{n,j}(\diff s),
\end{equation*}
with $N^M$ defined in \eqref{eq_RMF_cFIAP},
and 
\begin{equation*}
\tilde{A}_i(t)=\sum_{j \neq i}\int_0^t h_{j\rightarrow i}(s)\widehat{N}_{j\rightarrow i}(\diff s),
\end{equation*}
with $(\widehat{N}_{j \rightarrow i})_j$ independent Poisson point processes with respective intensities $s\mapsto\E[\tilde{\lambda}_j(s)]$, then, 
\begin{enumerate}
\item the processes $(\tilde{A}_1,\ldots,\tilde{A}_K)$ are independent, as are the processes $(\tilde{\lambda}_1,\ldots,\tilde{\lambda}_K);$
\item there exists $C_1>0$ such that for all $(m,i) \in \{1,\ldots,M\}\times\{1,\ldots,K\}$,
\begin{equation*}
    d_{TV}(A^M_{m,i}(t),\tilde{A}_i(t))\leq \frac{C_1}{\sqrt{M}};
\end{equation*}
\item there exists $C_2>0$ such that for all $(m,i) \in \{1,\ldots,M\}\times\{1,\ldots,K\}$,
\begin{equation*}
    d_{TV}(\lambda^M_{m,i}(t),\Tilde{\lambda}_i(t))\leq \frac{C_2}{\sqrt{M}},
\end{equation*}
where $\lambda^M_{m,i}(t)$ is defined by \eqref{eq_RMF_cFIAP} and $\Tilde{\lambda}_i(t)$ is defined in \eqref{eq_cFIAP_PH};
\item let $\mathcal{N}$ be a finite subset of $\N^*$, for all $i\in \{1,\ldots,K\},$ the processes $(A^M_{m,i}(\cdot))_{m \in \mathcal{N}}$ and $(\lambda^M_{m,i}(\cdot))_{m \in \mathcal{N}}$ weakly converge in the Skorokhod space $D([0,T])^{\card(\mathcal{N})}$ endowed with the product Skorokhod topology to $\card(\mathcal{N})$ independent copies of the corresponding limit processes $(\tilde{A}_i(\cdot))$ and $(\tilde{\lambda}_i(\cdot))$ when $M \rightarrow \infty$.
\end{enumerate}

\end{theorem}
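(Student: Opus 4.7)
The plan is to treat the four parts of the theorem in sequence, following the strategy developed for discrete-time FIAPs and for the excitatory continuous-time case (\cite{bdt_2022}, \cite{Davydov2022}) and extending it to the present general setting. For part 1, the independence in the Poisson-Hypothesis system is structural: equation \eqref{eq_cFIAP_PH} for $\tilde{\lambda}_i$ is driven only by the independent Poisson inputs $(\hat{N}_{j\to i})_{j\neq i}$ and by $\tilde{N}_i$ itself, which is in turn driven by $\tilde{\lambda}_i$. Since the driving Poisson processes indexed by different target nodes are disjoint and independent, a pathwise construction of the solution to \eqref{eq_cFIAP_PH} yields $(\tilde{\lambda}_i)_i$ as independent coordinates, and consequently $(\tilde{A}_i)_i$ are independent as well.

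For part 2, conditionally on the state processes in all replicas $n\neq m$, each point of $N^M_{n,j}$ is independently routed to replica $m$ with probability $1/(M-1)$. A Stein--Chen / Le Cam-type Poisson approximation applied to the resulting Bernoulli-thinned sum yields a total-variation bound between the routed counting process and an inhomogeneous Poisson integral with rate $\frac{1}{M-1}\sum_{n\neq m}\lambda^M_{n,j}(s)$, with error of order $O(1/M)$ on any bounded interval once Assumption \ref{Ass_main} and the exponential moment condition of Assumption \ref{Ass_2_cFIAP} supply uniform control of $\E\bigl[\int_0^t \lambda^M_{n,j}(s)^2\,ds\bigr]$. Exchangeability of the replicas and an $L^2$ law of large numbers then substitute $\E[\lambda^M_{m,j}(s)]$ for that empirical rate at cost $O(1/\sqrt{M})$. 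For part 3, a coupling between the RMF dynamics \eqref{eq_RMF_cFIAP} and the PH dynamics \eqref{eq_cFIAP_PH} driven by shared Poisson randomness translates the bound on $A^M_{m,i}$ into one on $\lambda^M_{m,i}$ via the Lipschitz property of $f$ and the contractivity inherited from Assumption \ref{Ass_main}; a Grönwall-type inequality then closes the self-consistent loop between the intensity $\lambda^M_{m,j}$ appearing in the Poisson rate and the target $\tilde{\lambda}_j$, provided the horizon $T$ is chosen small enough for a strict contraction.

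For part 4, the pathwise coupling together with the exchangeability of the replicas upgrades the marginal bound of part 3 to joint total-variation convergence on any finite subset $\mathcal{N}$: the probability that two chosen replicas in $\mathcal{N}$ interact up to time $T$ is $O(1/M)$, so the joint law factorizes up to a correction of that order. Tightness in $D([0,T])^{\card(\mathcal{N})}$ follows from the uniform boundedness of the compensators of $\lambda^M_{m,i}$ provided by Assumption \ref{Ass_main} via Aldous' criterion, which together with the convergence of finite-dimensional distributions yields the desired weak convergence in the product Skorokhod topology.

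The main obstacle is the self-consistency built into parts 2 and 3: the candidate Poisson rate that should govern $\tilde{A}_i$ is $\E[\tilde{\lambda}_j(s)]$, but in the $M$-replica system the corresponding rate is a random empirical average of the very intensities one is trying to approximate. Closing this loop through a fixed-point contraction while preserving the $O(1/\sqrt{M})$ rate, which is the order of the CLT fluctuations of the thinned counts, is what dictates both the form of the bound and the restriction to a sufficiently small time horizon $T$; going beyond this horizon would require either a stability argument in the spirit of propagation-of-chaos uniformly in time, or additional monotonicity hypotheses on $f$, $g_i$, $\sigma_i$.
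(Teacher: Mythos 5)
Your proposal follows essentially the same architecture as the paper's proof: structural independence via the Poisson-embedding construction for part~1; a Chen--Stein bound on the Bernoulli-thinned aggregation conditional on replica counts combined with a law of large numbers for the empirical rate for part~2; the mapping/Lipschitz continuity of $f$ to pass the bound from $A^M_{m,i}$ to $\lambda^M_{m,i}$ for part~3; and exchangeability plus tightness for part~4. You also correctly identify the central difficulty (the self-referential nature of the empirical rate) and propose resolving it via a fixed-point contraction, which is exactly the paper's mechanism: it sets up an input-output map $\Phi$ on $(\mathcal{P}(D_T))^{MK}$ with a Kantorovich metric, proves a Sznitman-type Gr\"onwall estimate showing the iterates of $\Phi$ form a Cauchy sequence, and shows that a triangular law of large numbers ($\TLLN$) is preserved by $\Phi$; seeding with i.i.d.\ inputs (which trivially satisfy $\TLLN$) and passing to the fixed point then closes the loop. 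Two minor points where you differ in emphasis: the paper's Gr\"onwall iteration does not actually require a strict contraction on a short horizon --- the $T^n/n!$ factorial decay does the work for any fixed $T$ --- and the small-$T$ restriction instead comes from the exponential-moment propagation (Lemma~\ref{lem_exp_moment_bound_cFIAP}) and the truncation in the Sznitman estimate; and the paper first reduces to simple interaction functions $h_{j\to i}$ before applying Chen--Stein, a technical step you omit but which is needed to make the thinning argument precise.
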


\section{Proof of the theorem}

We will follow the general proof framework developed by the author in the previous work \cite{Davydov2022}. We will emphasize the technical points that were adapted to this more general case.

First, we remind the following Poisson embedding representation for point processes with a stochastic intensity \cite[Section 3]{BremMass96}, which allows us to couple all the state processes defined by \eqref{eq_RMF_cFIAP} through their Poisson embeddings and initial conditions.
\begin{lemma}
\label{lem_poisson_embedding}
Let $N$ be a point process on $\mathbb{R}$. Let $(\mathcal{F}_t)$ be the internal history of $N$. Suppose $N$ admits a $(\mathcal{F}_t)$-stochastic intensity $\{\mu(t)\}_{t \in \mathbb{R}}.$ Then there exists a Poisson point process $\overline{N}$ with intensity 1 on $\mathbb{R}^2$ such that for all $C \in \mathcal{B}(\mathbb{R}),$
\begin{equation*}
    N(C)=\int_{C \times \mathbb{R}}\one_{[0, \mu(s)]}(u)\overline{N}(\diff s\times \diff u).
\end{equation*}
\end{lemma}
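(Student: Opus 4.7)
The plan is to construct $\overline{N}$ explicitly by an enlargement of the probability space, and then to identify it as a Poisson process of unit intensity via a Laplace-functional computation that invokes the stochastic-intensity property of $N$ through a Doléans--Dade exponential martingale.

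First I would enlarge $(\Omega,\mathcal{F},\mathbf{P})$ to carry, independently of $\sigma(N,\mu)$, both an i.i.d.\ family $(U_n)$ of $\mathrm{Uniform}[0,1]$ random variables indexed by the points $(T_n)$ of $N$, and an independent Poisson point process $\pi$ on $\mathbb{R}\times\mathbb{R}_+$ of unit intensity. I then set
\[
\overline{N}\;:=\;\sum_{n}\delta_{(T_n,\,U_n\mu(T_n))}\;+\;\one_{\{u>\mu(s)\}}\cdot \pi.
\]
The required identity $N(C)=\int_{C\times\mathbb{R}}\one_{[0,\mu(s)]}(u)\,\overline{N}(\diff s\times\diff u)$ is then immediate by construction: the marked sum lies in the hypograph $\{u\le\mu(s)\}$ and contributes exactly one point per $T_n\in C$, while the restricted $\pi$-part has no mass in that strip. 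Extending to all of $\mathbb{R}^2$ is done by adding an independent unit Poisson cloud on $\mathbb{R}\times\mathbb{R}_-$, which leaves the integral identity untouched.

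The core of the argument is to show that $\overline{N}$ is Poisson of unit intensity. For a bounded nonnegative $f$ of bounded support I would condition on $\mathcal{H}:=\sigma(N,\mu)$. By independence of $\pi$ and of the uniform marks from $\mathcal{H}$, together with the standard Laplace-functional formula for the Poisson process $\pi$,
\[
\E\!\left[e^{-\overline{N}(f)}\,\middle|\,\mathcal{H}\right]\;=\;\exp\!\Big(-\!\!\int_{\{u>\mu(s)\}}\!\!(1-e^{-f(s,u)})\,ds\,du\Big)\,\exp\!\Big(\int \log\alpha(s)\,N(\diff s)\Big),
\]
with $\alpha(s):=\mu(s)^{-1}\int_0^{\mu(s)}e^{-f(s,u)}\,du$ (and $\alpha\equiv 1$ on $\{\mu=0\}$, a set not charged by $N$). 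Decomposing the deterministic Laplace exponent $I:=\int(1-e^{-f(s,u)})\,ds\,du = G + \int\mu(s)(1-\alpha(s))\,ds$ with $G:=\int_{\{u>\mu\}}(1-e^{-f})\,ds\,du$ reveals the Doléans--Dade exponential
\[
\mathcal{E}_t\;:=\;\exp\!\Big(\int_0^t \log\alpha(s)\,N(\diff s)\;-\;\int_0^t(\alpha(s)-1)\mu(s)\,ds\Big)
\]
associated with the $(\mathcal{F}_t)$-stochastic intensity $\mu$ of $N$. Taking expectations yields $\E[e^{-\overline{N}(f)}] = e^{-I}\,\E[\mathcal{E}_\infty] = e^{-I}$, which is the Laplace functional of Poisson(1) on $\mathbb{R}\times\mathbb{R}_+$; a standard monotone-class extension then identifies $\overline{N}$ as the desired Poisson process.

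The main obstacle is justifying $\E[\mathcal{E}_\infty]=1$, since $\mathcal{E}_t$ is a priori only a local martingale. Here boundedness of $f$ forces $\log\alpha(s)\in[-\|f\|_\infty,0]$ with temporal support contained in the $s$-projection of $\mathrm{supp}\,f$, so $\mathcal{E}_t$ is controlled on the relevant finite window $[a,b]$ by $\exp(\|f\|_\infty\,N([a,b]))$; combining this with a truncation $\mu\wedge K$ when $\mu$ is unbounded and passing to the limit by monotone convergence upgrades $\mathcal{E}$ to a genuine uniformly integrable martingale. The remaining items---predictability of $\alpha$ in the enlarged filtration, the monotone-class extension of the Laplace identity to arbitrary nonnegative test functions, and the extension from $\mathbb{R}\times\mathbb{R}_+$ to $\mathbb{R}^2$---are routine adaptations of classical constructions in the spirit of \cite[Section 3]{BremMass96}.
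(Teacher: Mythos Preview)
The paper does not actually prove this lemma: it is stated with the preface ``we remind the following Poisson embedding representation'' and is attributed directly to \cite[Section 3]{BremMass96}, with no proof environment following. So there is no ``paper's own proof'' to compare against; the result is quoted as a known tool.

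Your proposal is correct and is precisely the classical construction underlying the cited reference: enlarge the space with uniform marks on the points of $N$ to place them at height $U_n\mu(T_n)$ inside the hypograph, fill the epigraph with an independent unit Poisson cloud, and verify the Poisson property of the assembled process by a conditional Laplace-functional computation that reduces to $\E[\mathcal{E}_\infty]=1$ for the Dol\'eans--Dade exponential associated with the predictable ratio $\alpha$. Your identification of the uniform-integrability step as the only nontrivial point, and your handling of it via the boundedness and compact temporal support of $f$, are both appropriate. In short, you have reconstructed the standard argument that the paper is content to cite; nothing further is needed here.
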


For $m \geq 1, M \geq 1, 1 \leq i \leq K,$ let $\overline{N}_{m,i}$ be i.i.d. Poisson point processes on $\R^+ \times \R^+$ with intensity 1.

Let $\Omega=(\mathbb{R}^+ \times ((\mathbb{R}^+)^2)^{\mathbb{N}^*})^{\mathbb{N}^*}$ be a probability space endowed with the probability measure $(\mu_0 \otimes P)^{\otimes \mathbb{N}^*},$ where $\mu_0$ is the law of the initial conditions and $P$ is the law of a Poisson process with intensity 1 on $(\mathbb{R}^+)^2$.
We construct on $\Omega$ the following processes:
\begin{itemize}
    \item The processes $(N^M_{m,i}(t)), m \geq 1, M \geq 1, 1 \leq i \leq K,$ with stochastic intensities $(\lambda^M_{m,i}(t))$ satisfying
    \begin{equation}
    \label{eq_rmf_coupling_cFIAP}
    \begin{split}
    &\lambda^M_{m,i}(t)=\int_0^t \int_0^{+\infty}\left(g_i(s,\lambda^M_{m,i}(s))-\lambda^M_{m,i}(s)\right) \one_{[0,\lambda^M_{m,i}(s)]}(u) \overline{N}_{m,i}(\diff s \times \diff u)\\
    &+f\left(\sum_{n \neq m} \sum_{j \neq i} \int_0^t\int_0^{+\infty} h_{j\rightarrow i}(s)\one_{\{V^M_{(n,j)\rightarrow i}(s)=m\}} \one_{[0,\lambda^M_{n,j\rightarrow i}(s)]}(u) \overline{N}_{n,j\rightarrow i}(\diff s \times \diff u)\right)\\
    &+\int_0^t(\sigma_i(s,\lambda^M_{m,i}(s))-\lambda^M_{m,i}(s))\diff s+\lambda^M_{m,i}(0),
    \end{split}
    \end{equation}
    with $\lambda^M_{m,i}(0)=Z_{i}$ for all $m \in \N^*$, where $Z_i$ is a random variable with law $\mu_0$ and where, for all $M$, $(V^M_{(n,j)\rightarrow i}(t))_j$ are càdlàg stochastic processes such that for each point $T_{n,j}$ of $\hat{N}_{n,j}$, the random variables $(V^M_{(n,j)\rightarrow i}(T_{n,j}))_j$ are independent of the past, mutually independent and uniformly distributed on $\{1,...,M\}\setminus \{n\}$, considered as marks of the Poisson point process $\overline{N}_{n,j}$. Namely, to each point of the Poisson embedding, we attach a mark that is an element of $(\N^{K})^{N^*}$, where the $M$th term of the sequence corresponds to $(V^M_{(n,j)\rightarrow i}(T_{n,j}))_j$.
    \item The processes $(\tilde{N}_{i}(t)), 1 \leq i \leq K,$ with stochastic intensities $(\tilde{\lambda}_{i}(t))$ verifying
    \begin{equation}
    \label{eq_limit_coupling_cFIAP}
    \begin{split}
    \widetilde{\lambda}_{i}(t)&=\widetilde{\lambda}_{i}(0)+f\left(\sum_{j \neq i} \int_0^t\int_0^{+\infty}h_{j\rightarrow i}(s)\one_{[0,\E[\widetilde{\lambda}_{j}(s)]]}(u)\overline{N}_{j,i}(\diff s \times \diff u)\right)\\
    &+\int_0^t \int_0^{+\infty}\left(g_i(s,\widetilde{\lambda}_i(s))-\widetilde{\lambda}_{i}(s)\right) \one_{[0,\widetilde{\lambda}_{i}(s)]}(u)\overline{N}_{i,i}(\diff s \times \diff u),
    \end{split}
    \end{equation}
    with $\widetilde{\lambda}_{i}(0)=Z_{i}$.
\end{itemize}

Just as in the particular case of neural dynamics from Example \ref{ex_GL} covered in \cite{Davydov2022}, this representation is sufficient to derive the following, which is statement 1 of Theorem \ref{th_Poisson_CV_cFIAP}.
\begin{lemma}
\label{lem_asympt_indep_cFIAP}
The processes $(\tilde{A}_i)_{1 \leq i\leq K}$ are independent, as are the processes $(\tilde{\lambda}_1,\ldots,\tilde{\lambda}_K)$.
\end{lemma}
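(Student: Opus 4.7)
The plan is to exploit the defining feature of the Poisson-Hypothesis coupling \eqref{eq_limit_coupling_cFIAP}: the aggregation thresholds $\one_{[0,\E[\widetilde{\lambda}_j(s)]]}(u)$ involve the \emph{deterministic} functions $s \mapsto \E[\widetilde{\lambda}_j(s)]$ rather than the random intensities of neighbouring nodes. As a consequence, even though the aggregation term in the equation for $\widetilde{\lambda}_i$ formally carries information from nodes $j \neq i$, that information enters only through deterministic quantities; the only random driving objects actually appearing in the construction of $\widetilde{\lambda}_i$ are those indexed by $(\cdot, i)$, together with the initial value $Z_i$.

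Concretely, for each $i$ I would introduce the $\sigma$-field
\[
\mathcal{G}_i := \sigma\bigl(Z_i,\ \overline{N}_{j,i} : 1 \leq j \leq K\bigr).
\]
By the construction of the probability space $\Omega$ at the beginning of Section 2, the initial conditions $(Z_i)$ are i.i.d.\ and the driving Poisson point processes $(\overline{N}_{j,i})_{j,i}$ are mutually independent. Hence the families $\mathcal{G}_1,\dots,\mathcal{G}_K$ are mutually independent, since their generating random objects form disjoint, independent groups indexed by the second coordinate $i$.

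It then suffices to check that $\widetilde{\lambda}_i$ is $\mathcal{G}_i$-measurable. Once the deterministic curves $m_j(s) := \E[\widetilde{\lambda}_j(s)]$ are fixed, equation \eqref{eq_limit_coupling_cFIAP} reduces, for each $i$ separately, to an autonomous SDE driven solely by $Z_i$, $\overline{N}_{i,i}$ and the $\overline{N}_{j,i}$ for $j \neq i$, so its pathwise unique solution is $\mathcal{G}_i$-measurable. A standard Picard-type fixed-point argument on a small interval $[0,T]$ identifies the vector of means $(m_j)_{1\leq j\leq K}$ simultaneously, using the Lipschitz continuity of $f$, the uniform bound $H$ on the interaction functions, Assumption \ref{Ass_main} (which prevents blow-up of the intensities between jumps by ensuring monotonic decrease) and Assumption \ref{Ass_2_cFIAP} (for the integrability of $\widetilde{\lambda}_i(0)$); this is directly adapted from the argument carried out in \cite{Davydov2022} for the excitatory Galves--Löcherbach case.

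The independence of $(\widetilde{\lambda}_1,\dots,\widetilde{\lambda}_K)$ is then immediate from the independence of the $\mathcal{G}_i$. The same reasoning applied to
\[
\widetilde{A}_i(t) = \sum_{j \neq i}\int_0^t\int_0^{+\infty} h_{j\rightarrow i}(s)\one_{[0,m_j(s)]}(u)\,\overline{N}_{j,i}(\diff s\times \diff u),
\]
which is measurable with respect to $\sigma(\overline{N}_{j,i} : j \neq i) \subset \mathcal{G}_i$, yields the independence of $(\widetilde{A}_i)$. The main obstacle here is not the independence itself, which is essentially a bookkeeping statement about which Poisson processes drive which equation, but the well-posedness of the fixed-point equation for the mean curves $(m_j)$; this is precisely where the horizon $T$ must be taken small enough.
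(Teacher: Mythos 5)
Your proof is correct and takes essentially the same approach as the paper: the paper likewise observes that, under the Poisson Hypothesis coupling \eqref{eq_limit_coupling_cFIAP}, all the randomness in $\tilde{A}_i$ (and hence in $\widetilde{\lambda}_i$, via what the paper calls a "mapping theorem argument") lives in the Poisson embeddings $\overline{N}_{j,i}$ with second index $i$, which are independent across $i$. You have merely spelled out the $\sigma$-field bookkeeping and flagged the well-posedness of the fixed-point system for the mean curves $(m_j)$ that the paper leaves implicit.
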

\begin{proof}
For all $t \in [0,T]$, we can write using the construction above
\begin{equation*}
\tilde{A}_{i}(t)=\sum_{j\neq i}\int_0^t\int_0^{+\infty}h_{j\rightarrow i}(s)\one_{[0,\E[\tilde{\lambda}_{j}(s)]]}(u)\overline{N}_{j,i}(\diff s \times \diff u).
\end{equation*}

Therefore, all the randomness in $\tilde{A}_i$ is contained in the Poisson embeddings $(\hat{N}_{k,i})_{1 \leq k \leq K}$. Thus, for $i \neq j$, $\tilde{A}_i$ and $\tilde{A}_j$ are independent. The independence of the processes $(\tilde{\lambda}_1,\ldots,\tilde{\lambda}_K)$ follows from a mapping theorem argument.
\end{proof}
\subsection*{Properties of the replica-mean-field and limit processes}
\label{subsec_RMF_properties_cFIAP}
In this section, we prove several properties of the RMF and limit dynamics that will be used throughout the proof. In what follows, we will often omit the $M$ superscript in the notations $N^M_{m,i}$, $A^M_{m,i}$ and $\lambda^M_{m,i}$ to increase readability.

We generalize from \cite[Lemma 2.3]{Davydov2022} the following representation of the arrival process $A_{m,i}$. For $n \neq m$ and $j\neq i$, if $S \in \Supp(N_{n,j}|_{[0,T)})$, we define $B^M_{S,(n,j)\rightarrow (m,i)}$ to be the random variable equal to 1 if the routing between replicas at time $S$ caused by a departure in node $j$ in replica $n$ chose the replica $m$ for the recipient $i$ of the interaction thus produced, and 0 otherwise. In other words, using notation from \eqref{eq_RMF_cFIAP}, $B^M_{S,(n,j)\rightarrow (m,i)}=\one_{V_{(n,j)\rightarrow(m,i)}(S)}.$ As such, it is clear that we can write for all $t \in [0,T], m \in \{1,\ldots,M\}$ and $i\in \{1,\ldots,K\},$ 
\begin{equation}
\label{eq_arr_rep_cFIAP}
A_{m,i}(t)=\sum_{n \neq m}\sum_{j \neq i}\sum_{k \in N_{n,j}\cap[0,t]}h_{j\rightarrow i}(k)B^M_{k,(n,j)\rightarrow (m,i)}.
\end{equation}
Note that when $m,n,i$ and $j$ are fixed, the random variables $(B^M_{k,(n,j)\rightarrow (m,i)})_{k \leq N_{n,j}([0,T])}$ are i.i.d. 
Also note that when $n,j,i$ and $k$ are fixed, the joint distribution of $(B^M_{k,(n,j)\rightarrow (m,i)})_m$ with $m \in \{1,\ldots,M\}$ is that of Bernoulli random variables with parameter $\frac{1}{M-1}$ such that exactly one of them is equal to 1, all the others being equal to 0.
Combining these two observations allows us to show that the following lemma, highlighting a key property of the replica-mean-field approach, holds:

\begin{lemma}
\label{lem_cond_indep_bern_cFIAP}
Fix $(m,i) \in \{1,\ldots,M\}\times\{1,\ldots,K\}$. Keeping notation from \eqref{eq_arr_rep_cFIAP}, let \\
$N=(N_{n,j}([0,t]))_{n \neq m, j \neq i} \in  \mathbb{N}^{(K-1)(M-1)}.$

Conditionally on the event $\{N=q\},$ for $q=(q_{n,j})_{n \neq m, j \neq i} \in \mathbb{N}^{(K-1)(M-1)},$ the random variables $(B^M_{k,(n,j)\rightarrow (m,i)})_{n \neq m, j \neq i, k \in \{1,\ldots,q_{n,j}\}}$ are independent Bernoulli random variables with parameter $\frac{1}{M-1}.$
\end{lemma}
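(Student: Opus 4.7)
The strategy is to combine the two pointwise observations stated just above the lemma with a conditioning argument showing that $\{N=q\}$ carries no information about the Bernoulli family.

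First, I would rewrite each indicator as
\begin{equation*}
B^M_{k,(n,j)\to(m,i)}=\one_{\{V^M_{(n,j)\to i}(T_{n,j,k})=m\}},
\end{equation*}
where $T_{n,j,k}$ denotes the $k$-th point of $N^M_{n,j}$ lying in $[0,t]$. By Definition \ref{def_cFIAP_RMF}, at each such time the routing $V^M_{(n,j)\to i}(T_{n,j,k})$ is independent of the $\sigma$-algebra generated by events strictly before $T_{n,j,k}$ and uniformly distributed on $\{1,\ldots,M\}\setminus\{n\}$. Since $m \neq n$, each $B^M_{k,(n,j)\to(m,i)}$ has marginal Bernoulli$(1/(M-1))$ law.

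Second, to upgrade the marginals to an unconditional joint independence statement, I would enumerate the routing times $(T_{n,j,k})$ chronologically -- a finite list on $[0,t]$ almost surely -- and apply the tower property along this enumeration. Each successive routing is independent of the past $\sigma$-algebra, which by construction contains every previously enumerated routing; this yields joint independence of the family $(B^M_{k,(n,j)\to(m,i)})_{n\neq m, j\neq i, k\leq q_{n,j}}$.

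Third, I would show that the event $\{N=q\}$ is independent of this Bernoulli family. Here I would exploit the Poisson embedding coupling \eqref{eq_rmf_coupling_cFIAP}: by Definition \ref{def_cFIAP_RMF}, the routing marks toward destination $i$ attached to $(\overline{N}_{n,j})$ are jointly independent of the marks toward destinations $j'\neq i$. The task is then to argue that the counts $(N^M_{n,j}([0,t]))_{n\neq m, j\neq i}$ admit a representation as a measurable functional of the Poisson embeddings, the initial conditions, and the routing marks toward destinations other than $i$. Independence of the Bernoulli family from $\{N=q\}$ would follow immediately, and the conditional distribution would coincide with the unconditional one computed in the first two steps.

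The main obstacle is this third step: carefully tracing the dependency graph of \eqref{eq_rmf_coupling_cFIAP} to rule out any leakage of information from conditioning on counts at nodes $j \neq i$ back to the destination-$i$ routing variables. I expect this to be handled along the same lines as in the proof of \cite[Lemma 2.3]{Davydov2022}, with the additional verification that the more general functions $f$, $g_i$ and $\sigma_i$ do not introduce new measurability issues beyond what is already handled in the Galves--Löcherbach case.
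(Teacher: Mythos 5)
Your first two steps (marginal law and unconditional joint independence of the routing indicators, obtained by attaching routing marks to the Poisson embedding or, equivalently, by your chronological enumeration) are fine and consistent with what the paper does. The gap is in step 3. You assert that the counts $(N^M_{n,j}([0,t]))_{n\neq m, j\neq i}$ can be written as a measurable functional of the Poisson embeddings, the initial conditions, and the routing marks toward destinations \emph{other than} $i$, and that the remaining work is merely "carefully tracing the dependency graph to rule out any leakage." That factorization is false, and the leakage is real: the intensity $\lambda^M_{n,j}$, for $j\neq i$, aggregates arrivals from node $i$ in replicas $l\neq n$, which occur at points of $N^M_{l,i}$; the intensity $\lambda^M_{l,i}$ in turn aggregates arrivals routed \emph{to} destination $i$ via marks $B^M_{k,(n',j')\to (l,i)}$ — in particular the very family $(B^M_{k,(n,j)\to(m,i)})$ whose conditional distribution you are computing. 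So $N^M_{n,j}([0,t])$ does depend on the routing marks toward $i$, and there is no dependency-graph argument that can rule this out.

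The paper's proof closes exactly this loop by a symmetry argument that your plan has no counterpart to. It first shows that the \emph{aggregate} departure process $\sum_{l}N_{l,i}$ is independent of the routing marks toward $i$: under the Poisson-embedding representation, whether there is a point of $\sum_l N_{l,i}$ in an interval depends only on the aggregate intensity $t\mapsto \sum_l \lambda_{l,i}(t)$, and this sum is invariant under the routing because each interaction increments exactly one of the $\lambda_{l,i}$ by the same amount regardless of which replica is chosen. It then rewrites the arrivals from node $i$ to node $(n,j)$ as a thinning of this aggregate process by \emph{fresh} Bernoulli variables $(B^M_{k,(i)\to(n,j)})$ that are independent of the target family $(B^M_{k,(n,j)\to(m,i)})$. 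Independence of $N$ from the target family then follows, not because $N$ avoids information about the routing toward $i$, but because the only information it retains about node $i$'s activity passes through an aggregate that is routing-invariant. You would need to incorporate this aggregation/symmetry step; without it the third step of your proposal does not hold.
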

\begin{proof}
The structure of the proof is unchanged from \cite{Davydov2022}: since $N$ is entirely determined by the Poisson embeddings $(\hat{N}_{n,j})_{j\neq i}$ and the arrivals to the nodes $(n,j)$ from all the nodes $h\neq j$ across replicas, it is sufficient to show that these arrivals and the routing variables $(B^M_{k,(n,j)\rightarrow (m,i)})_{k\leq\hat{N}_{n,j}([0,t]\times\R^+)}$ are independent. Intuitively, this holds because arrivals are aggregated across all replicas, which will erase the eventual dependencies due to the routing variables to nodes $i$ choosing one replica rather than another.\\
In order to transcribe this intuition rigorously, we first show that the total number of departures from nodes $i$ up to time $t$, that is, $\sum_{l=1}^M N_{l,i}([0,t])$, and the routing variables \\
$(B^M_{k,(n,j)\rightarrow (m,i)})_{k\leq\hat{N}_{n,j}([0,t]\times\R^+)}$ are independent. Indeed, using the representation given by Lemma \ref{lem_poisson_embedding}, due to the structure of the Poisson embeddings $(\hat{N}_{l,i})_{l\in \{1,\ldots,M\}}$, there is a point of $\sum_{l=1}^M N_{l,i}$ in some interval $I$ iff there is a point of the superposition of the corresponding Poisson embeddings such that the x-coordinate is in $I$ and the y-coordinate is under the curve of the function $t\rightarrow \sum_{l=1}^M\lambda_{l,i}(t)$. In turn, the last event does not depend on $(B^M_{k,(n,j)\rightarrow (m,i)})_{k\leq\hat{N}_{n,j}([0,t]\times\R^+)}$, as the symmetry inherent to the replica structure ensures that all arrivals increment $t\rightarrow \sum_{l=1}^M\lambda_{l,i}(t)$ by the same amount, which concludes the proof of this preliminary remark.

For all $(n,j)$ such that $n \neq m$ and $j \neq i$, let 
\begin{equation*}
A_{i \rightarrow (n,j)}(t)=\sum_{l\neq n}\sum_{k\in N_{l,i}\cap[0,t]}h_{j,i}(k)B^M_{k,(l,i)\rightarrow (n,j)}.
\end{equation*}
Note that $A_{i \rightarrow (n,j)}(t)$ represents the arrivals to node $j$ in replica $n$ from all nodes $i$ across replicas. As such, it is clear that we can write
\begin{equation*}
A_{i \rightarrow (n,j)}(t)=\sum_{k\in \sum_{l \neq n} N_{l,i}\cap[0,t]} h_{j,i}(k) B^M_{k,(i)\rightarrow (n,j)},
\end{equation*}
where $(B^M_{k,(i)\rightarrow (n,j)})$ are independent Bernoulli random variables with parameter $\frac{1}{M-1}$ such that they and $(B^M_{k,(n,j)\rightarrow (m,i)})$ are independent. Then, $A_{i \rightarrow (n,j)}(t)$ and $(B^M_{k,(n,j)\rightarrow (m,i)})$ are independent. Therefore, $N$, which is entirely determined by the Poisson embeddings $(\hat{N}_{n,j})$ and the arrivals $(A_{h \rightarrow (n,j)}(t))_{h\neq j}$, and
$(B^M_{k,(n,j)\rightarrow (m,i)})_{k\leq\hat{N}_{n,j}([0,T]\times\R^+)}$, are independent. Thus, conditioning on $N$ does not break independence between the variables $(B^M_{k,(n,j)\rightarrow (m,i)}).$
\end{proof}
We will now bound the moments of both the M-replica and limit processes, using the bounds on the moments of the initial conditions. The validity of this bound is the main reason for the introduction of Assumption \ref{Ass_main}, which allows to stochastically dominate the dynamics by the same dynamics without the autonomous evolution integral terms, which enables Grönwall's lemma.
\begin{lemma}
\label{lem_moment_bound_cFIAP}
Suppose the initial conditions $Z_i$ verify Assumption \ref{Ass_2_cFIAP}.
Then, for all $p\geq 1$, for all $(m,i) \in \{1,\ldots,M\}\times\{1,\ldots,K\}$, for all $t \in [0,T]$, there exists $Q_p \in \mathbb{R}_p[X]$, a polynomial of degree exactly $p$, such that
\begin{equation}
\label{eq_gronwall_p_cFIAP}
\E[\lambda_{m,i}^{p}(t)]\leq Q_p(\E[\lambda_{m,i}(0)]).
\end{equation}
\end{lemma}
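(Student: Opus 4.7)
The plan is to exploit Assumption~\ref{Ass_main} to stochastically dominate $\lambda^M_{m,i}$ by a simple counting process, bound the moments of that counting process via its compensator, and close the resulting recursion on $p$ with Grönwall's inequality. Since $g_i(s,\lambda)\le \lambda$ and $\sigma_i(s,\lambda)\le \lambda$, the last two integrals in \eqref{eq_RMF_cFIAP} are pathwise nonpositive, while the Lipschitz property of $f$ with $f(0)=0$ together with $|h_{j\to i}|\le H$ imply that each aggregation jump changes $f(\cdot)$ by at most $LH$ in absolute value (where $L$ denotes the Lipschitz constant of $f$). Hence pathwise,
\begin{equation*}
\lambda^M_{m,i}(t)\le Z_i + LH\, R_{m,i}(t),\qquad R_{m,i}(t):=\sum_{n\ne m,\,j\ne i}\int_0^t \one_{\{V^M_{(n,j)\to i}(s)=m\}} N^M_{n,j}(\diff s).
\end{equation*}

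The next step is to identify the $(\mathcal{F}_t)$-stochastic intensity of $R_{m,i}$. Because the routing variables are, conditional on the past, uniform on $\{1,\dotsc,M\}\setminus\{n\}$ and mutually independent, this intensity equals $\mu^M_{m,i}(t)=\tfrac{1}{M-1}\sum_{n\ne m,\,j\ne i}\lambda^M_{n,j}(t)$. The coupled construction \eqref{eq_rmf_coupling_cFIAP} with $\lambda^M_{m,i}(0)=Z_i$ identical in $m$ makes the law of $\lambda^M_{m,i}(t)$ independent of $m$; writing $u_{q,i}(t):=\E[\lambda^M_{1,i}(t)^q]$, two successive applications of Jensen's inequality combined with this replica symmetry yield the crucial $M$-free bound
\begin{equation*}
\E[\mu^M_{m,i}(s)^q]\le (K-1)^{q-1}\sum_{j\ne i}u_{q,j}(s).
\end{equation*}

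I then induct on $p$. For $p=1$, taking expectation of the domination gives $u_{1,i}(t)\le \E[Z_i]+LH\sum_{j\ne i}\int_0^t u_{1,j}(s)\diff s$, and Grönwall applied to $\sum_i u_{1,i}$ closes this case. For the inductive step, raise the domination to the power $p$ via $(a+b)^p\le 2^{p-1}(a^p+b^p)$ and control $\E[R_{m,i}(t)^p]$ by applying It\^o's formula to the pure-jump process $R_{m,i}$ and compensating:
\begin{equation*}
\E[R_{m,i}(t)^p]=\sum_{k=0}^{p-1}\binom{p}{k}\int_0^t \E[R_{m,i}(s)^k\mu^M_{m,i}(s)]\diff s.
\end{equation*}
Each cross term $\E[R^k\mu]$ is then dispatched by H\"older's inequality with conjugate exponents $p/k$ and $p/(p-k)$, combined with the Jensen bound on $\mu^q$; modulo the elementary estimate $x^q\le x+x^p$ for $x\ge 0$ and $1\le q\le p$, this reduces everything to integrals of $u_{q,j}$ with $q\le p$. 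The inductive hypothesis controls $q<p$, and a final Grönwall application on $\sum_i u_{p,i}(t)$ closes the loop, producing a polynomial of degree exactly $p$.

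The main obstacle will be the cross terms $\E[R_{m,i}(s)^k\mu^M_{m,i}(s)]$ for $1\le k\le p-1$: $R_{m,i}$ and $\mu^M_{m,i}$ are strongly coupled through the replica structure, so H\"older's inequality must be combined with the Jensen-plus-replica-symmetry bound above so as to cancel the $\tfrac{1}{M-1}$ prefactor against the $M-1$ summands in $\mu^M_{m,i}$ — paralleling the conditional-independence reasoning of Lemma~\ref{lem_cond_indep_bern_cFIAP}. This cancellation is precisely what keeps the constants in $Q_p$ independent of $M$, which is essential for the application of this lemma in the proof of Theorem~\ref{th_Poisson_CV_cFIAP}.
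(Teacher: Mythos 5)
Your proof follows the same overall strategy as the paper's --- drop the autonomous evolution terms via Assumption~\ref{Ass_main}, control the remaining aggregation term using the stochastic intensity property, and close with Gr\"onwall and an induction on $p$ --- but your write-up is substantially more detailed, and in fact more careful, than the paper's. The paper's treatment of $p=1$ works directly with $\E[\lambda_{m,i}]$ and an $\argmax$ over nodes, and the passage from $\E[f(\cdot)]$ to the integrated intensity is written somewhat loosely (the expectation cannot literally be pushed inside the nonlinear $f$); your pathwise reduction $\lambda^M_{m,i}(t)\le Z_i+LH\,R_{m,i}(t)$ avoids this ambiguity and makes the role of the Lipschitz constant explicit. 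For the induction on $p$, the paper merely gestures at ``establishing a stochastic differential equation for $\lambda^{p+1}$ and repeating the steps above,'' whereas you spell out the compensator identity for $R^p_{m,i}$, the thinning identification $\mu^M_{m,i}=\tfrac1{M-1}\sum_{n\neq m,j\neq i}\lambda^M_{n,j}$, and the Jensen--replica-symmetry cancellation of the $1/(M-1)$ prefactor that keeps the constants $M$-free. One small step you should make explicit: after H\"older on $\E[R^k\mu]$ you still have the superlinear factor $\E[R^p]^{k/p}$ on the right, so you need Young's inequality (or apply it directly on $R^k\mu$) to obtain a genuinely linear Gr\"onwall inequality in $\sum_i\E[R_{m,i}(t)^p]$; once you do, the argument closes exactly as you indicate. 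Finally, note that both your bound and the paper's naturally deliver a polynomial in the moments $\E[Z_i^q]$, $q\le p$, rather than literally in $\E[Z_i]$ as the statement is phrased; this is a pre-existing informality in the lemma's formulation (the coefficients of $Q_p$ implicitly depend on the law $\mu_0$), not a flaw of your argument.
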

\begin{proof}
Note that, by Assumption \ref{Ass_main} and monotonicity, the dynamics that we consider are stochastically dominated by the same dynamics without the autonomous evolution terms, by which we mean the two last integral terms in \eqref{eq_RMF_cFIAP}. Thus, we can restrict ourselves to this special case.
We first prove the result for $p=1$.
Let $t\in [0,T]$.
Let $i(t)=\argmax_{j \in \{1,\ldots,K\}}\left|\E[\widehat{\lambda}_{n,j \rightarrow i}(t)]\right|$.
Using the stochastic intensity property \eqref{eq_stoch_int_cfiap}, we have
\begin{equation*}
\E[\lambda_{m,i(t)}(t)]=\E[\lambda_{m,i(t)}(0)]+\E\left[f(\sum_{n \neq m} \sum_{j \neq i(t)} \int_0^t \one_{\{V_{(n,j)\rightarrow i(t)}(s)=m\}} \widehat{\lambda}_{n,j\rightarrow i(t)}(s)\diff s)\right].
\end{equation*}
Using the assumptions on $f$ and $h_{j\rightarrow i}$ from Definition \ref{def_cFIAP}, we have
\begin{equation*}
\E[\lambda_{m,i(t)}(t)]\leq\E[\lambda_{m,i(t)}(0)]+\frac{H}{M-1}\sum_{n \neq m} \sum_{j \neq i(t)} \E\left[\int_0^t\lambda_{n,j}(s) \diff s\right].
\end{equation*}
By the definition of $i(t)$ and exchangeability of the replicas, we have
\begin{equation*}
\E[\lambda_{m,i(t)}(t)]\leq\E[\lambda_{m,i(t)}(0)]+H \sum_{j \neq i(t)} \int_0^t\E\left[\lambda_{m,i(s)}(s)\right] \diff s.
\end{equation*}

This gives by Grönwall's lemma the desired result:
\begin{equation}
\label{eq_gronwall_1_cFIAP}
\E[\lambda_{m,i(t)}(t)]\leq \E[\lambda_{m,i(t)}(0)]e^{(K-1)HT}=:Q_1(\E[\lambda_{m,i(t)}(0)]).
\end{equation}

This reasoning can be extended by induction to all $p\geq 2$, establishing a stochastic differential equation for $\lambda^{p+1}_{m,i(t)}(t)$ and repeating the steps above.
\end{proof}
Finally, note that the exact same reasoning can be applied to obtain an equivalent result for the limit process, which we will only state:
\begin{lemma}
\label{lem_limit_moment_bound_cFIAP}
For all $p\geq 1$, for all $i \in \{1,\ldots,K\}$, for all $t \in [0,T]$, there exists $\tilde{Q}_p \in \mathbb{R}_p[X]$ a polynomial of degree exactly $p$ such that
\begin{equation}
\label{eq_gronwall_tilde_p}
\E[\tilde{\lambda}_{i}^{p}(t)]\leq \tilde{Q}_p[\E[\tilde{\lambda}_{i}(0)]].
\end{equation}
\end{lemma}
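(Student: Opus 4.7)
The plan is to mirror the proof of Lemma \ref{lem_moment_bound_cFIAP} almost verbatim, with two substantive simplifications afforded by the limit regime: first, the interaction processes $\hat{N}_{j\rightarrow i}$ are genuine Poisson processes with deterministic intensities $s\mapsto \E[\tilde{\lambda}_j(s)]$, which eliminates the need for the exchangeability-over-replicas argument and the conditioning computations of Lemma \ref{lem_cond_indep_bern_cFIAP}; second, there is no routing variable $V$ to handle. As in the RMF case, Assumption \ref{Ass_main} together with monotonicity lets us stochastically dominate \eqref{eq_cFIAP_PH} by the same dynamics with the two autonomous evolution terms removed, so we may assume without loss of generality that $\tilde{\lambda}_i(t)=\tilde{\lambda}_i(0)+f\bigl(\sum_{j\neq i}\int_0^t h_{j\rightarrow i}(s)\hat{N}_{j\rightarrow i}(\diff s)\bigr)$.

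For $p=1$, I would set $i(t)=\argmax_{j\in\{1,\ldots,K\}}\E[\tilde{\lambda}_j(t)]$, take expectations in the dominating equation, and invoke the Lipschitz property of $f$ (with $f(0)=0$) together with $|h_{j\rightarrow i}|\leq H$ and the stochastic intensity property applied to the Poisson processes $\hat{N}_{j\rightarrow i}$ to obtain
\begin{equation*}
\E[\tilde{\lambda}_{i(t)}(t)]\leq \E[\tilde{\lambda}_{i(t)}(0)] + L_f H\sum_{j\neq i(t)}\int_0^t \E[\tilde{\lambda}_j(s)]\diff s,
\end{equation*}
where $L_f$ is the Lipschitz constant of $f$. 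By definition of $i(s)$, the right-hand side is bounded by $\E[\tilde{\lambda}_{i(t)}(0)]+L_f H(K-1)\int_0^t \E[\tilde{\lambda}_{i(s)}(s)]\diff s$, and since the initial conditions are i.i.d.\ with common law $\mu_0$, applying Grönwall's lemma to $t\mapsto \E[\tilde{\lambda}_{i(t)}(t)]$ gives $\tilde{Q}_1(x)=x\,e^{(K-1)L_f HT}$, and then $\E[\tilde{\lambda}_i(t)]\leq \E[\tilde{\lambda}_{i(t)}(t)]\leq \tilde{Q}_1(\E[\tilde{\lambda}_i(0)])$.

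For the inductive step from $p$ to $p+1$, I would apply the change-of-variable formula for pure jump processes to $\tilde{\lambda}_i^{p+1}(t)$, writing the jump at an arrival from $\hat{N}_{j\rightarrow i}$ as $\Delta\tilde{\lambda}_i(s)=f(S_{s-}+h_{j\rightarrow i}(s))-f(S_{s-})$ where $S$ is the running argument of $f$; Lipschitzness forces $|\Delta\tilde{\lambda}_i(s)|\leq L_f H$. Binomial expansion of $(\tilde{\lambda}_i(s-)+\Delta\tilde{\lambda}_i(s))^{p+1}-\tilde{\lambda}_i^{p+1}(s-)$, together with the Poisson intensity $\E[\tilde{\lambda}_j(s)]$, yields, after taking expectations,
\begin{equation*}
\E[\tilde{\lambda}_i^{p+1}(t)]\leq \E[\tilde{\lambda}_i^{p+1}(0)]+\sum_{j\neq i}\sum_{k=1}^{p+1}\binom{p+1}{k}(L_f H)^k\int_0^t \E[\tilde{\lambda}_i^{p+1-k}(s)\tilde{\lambda}_j(s)]\diff s.
\end{equation*}
Setting $u(t)=\max_i \E[\tilde{\lambda}_i^{p+1}(t)]$, the $k=1$ cross term is handled by Young's inequality $x^p y\leq \tfrac{p}{p+1}x^{p+1}+\tfrac{1}{p+1}y^{p+1}$, producing a linear-in-$u$ contribution, while the $k\geq 2$ terms involve only moments of order $\leq p$ and are controlled by the inductive hypothesis. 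A final application of Grönwall's lemma to $u$ then delivers a bound of the form $u(t)\leq \tilde{Q}_{p+1}(\E[\tilde{\lambda}_i(0)])$ with $\tilde{Q}_{p+1}$ of degree exactly $p+1$.

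The only technical point requiring care is the induction step, specifically verifying that the cross-moment terms $\E[\tilde{\lambda}_i^{p+1-k}(s)\tilde{\lambda}_j(s)]$ for $k\geq 2$ can indeed be controlled purely by the previously obtained polynomial bounds on moments of orders $1,\ldots,p$; Hölder's inequality combined with the inductive hypothesis handles this, but one must ensure that the resulting polynomial in $\E[\tilde{\lambda}_i(0)]$ is of degree exactly $p+1$ (and not larger), which follows from the fact that each Hölder combination of moments of total order $p+1-k+1=p+2-k\leq p$ produces a polynomial of degree at most $p$ in the initial expectation.
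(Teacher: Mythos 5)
Your proof is correct and takes essentially the same route as the paper, which simply repeats the argument of Lemma~\ref{lem_moment_bound_cFIAP} with the replica-exchangeability step replaced by the observation that the Poisson intensities $s\mapsto\E[\tilde\lambda_j(s)]$ are deterministic; you also usefully fill in the induction on $p$ that the paper leaves implicit (``the exact same reasoning can be applied''). One very minor streamlining you could exploit: in the limit regime the cross-moments $\E[\tilde\lambda_i^{p+1-k}(s)\tilde\lambda_j(s)]$ automatically factor as $\E[\tilde\lambda_i^{p+1-k}(s)]\,\E[\tilde\lambda_j(s)]$ (the intensity is deterministic, or use Lemma~\ref{lem_asympt_indep_cFIAP}), so H\"older is not needed for the $k\geq 2$ terms and the Young step for $k=1$ can be replaced by directly bounding $\E[\tilde\lambda_j(s)]$ via $\tilde Q_1$.
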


Lemma \ref{lem_moment_bound_cFIAP} allows us to prove the following result, which states that Assumption \ref{Ass_2_cFIAP} can be propagated to any time $t$ less than some fixed $T.$

\begin{lemma}
\label{lem_exp_moment_bound_cFIAP}
There exists $\xi_0>0$ and $T>0$ such that for $\xi\leq \xi_0$ and all $t\leq T,$
\begin{equation}
\label{eq_exp_mom_bound_cFIAP}
\E[e^{\xi\lambda_{m,i}(t)}]<\infty \textrm{ and } \E[e^{\xi\tilde{\lambda}_{m,i}(t)}]<\infty.
\end{equation}
\end{lemma}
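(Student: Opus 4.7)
The plan is to expand the exponential moment as a Taylor series and bound each term via the polynomial moment control of Lemma \ref{lem_moment_bound_cFIAP} (respectively Lemma \ref{lem_limit_moment_bound_cFIAP}), after first refining those lemmas to make explicit how their polynomial bound depends on $p$.

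First, I would use Assumption \ref{Ass_main} once more to discard the non-positive fragmentation and drift contributions, reducing to the purely aggregative dynamics; each aggregation jump of $\lambda_{m,i}$ is then bounded in absolute value by $LH$, since $f$ is $L$-Lipschitz with $f(0)=0$ and $|h_{j\to i}|\leq H$. Next, I would revisit the argmax-plus-Gr\"onwall induction underlying Lemma \ref{lem_moment_bound_cFIAP} and track coefficients: applying It\^o's formula for jump processes to $\lambda_{m,i}^{p+1}$ produces a differential inequality for $\E[\lambda_{m,i(t)}^{p+1}(t)]$ with combinatorial weights $\binom{p+1}{k}$ coming from the expansion of $(\lambda+\Delta)^{p+1}$. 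Closing the cross-moments $\E[\lambda_{m,i(t)}^{p+1-k}\cdot\mu]$ (where $\mu$ is the aggregated arrival intensity from the other replicas) via exchangeability and the argmax device---rather than a crude Cauchy--Schwarz, which would introduce factorial-in-$p$ inflation---leads, after Gr\"onwall, to a bound of the schematic form
\begin{equation*}
\E[\lambda_{m,i}^{p}(t)]\leq e^{\beta p T}\,\E[\lambda_{m,i}^{p}(0)]+(\gamma T)^{p}\, p!,\qquad t\in[0,T],\ p\geq 1,
\end{equation*}
for constants $\beta,\gamma>0$ depending only on $K$, $L$ and $H$.

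Inserting this into the Taylor series and swapping the expectation with the sum by monotone convergence (all terms being non-negative) yields
\begin{equation*}
\E[e^{\xi\lambda_{m,i}(t)}]=\sum_{p\geq 0}\frac{\xi^{p}}{p!}\,\E[\lambda_{m,i}^{p}(t)]\leq\E\left[e^{\xi e^{\beta T}\lambda_{m,i}(0)}\right]+\sum_{p\geq 0}(\xi\gamma T)^{p},
\end{equation*}
which is finite as soon as $\xi\,e^{\beta T}\leq\xi_{0}$ (so that the first term is controlled by Assumption \ref{Ass_2_cFIAP}) and $\xi\gamma T<1$ (so that the geometric series converges). Both conditions are met by choosing $T$ small and then $\xi_{0}$ accordingly, uniformly in $(m,i)$ and $M$, establishing \eqref{eq_exp_mom_bound_cFIAP} for the replica-mean-field process; the identical argument, with Lemma \ref{lem_limit_moment_bound_cFIAP} in place of Lemma \ref{lem_moment_bound_cFIAP}, handles $\tilde{\lambda}_{i}(t)$.

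The main obstacle is the quantitative refinement of Lemma \ref{lem_moment_bound_cFIAP} in the second paragraph: one must show that the $p$-th moment grows at most geometrically in $p$, with any factorial $p!$ appearing only as a multiplicative correction cancelled by the $1/p!$ of the Taylor weight, as opposed to the na\"ive $p^{p}$ that a crude closure of the induction would produce. Exchangeability of the replicas together with the argmax trick is precisely what keeps this growth geometric and lets the exponential series converge on an interval $[0,T]$ with $T>0$ independent of $M$.
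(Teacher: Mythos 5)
Your proposal takes a genuinely different route from the paper. The paper does \emph{not} expand in Taylor series: after the same stochastic-domination reduction you make (drop the autonomous-evolution terms, replace $h$ by $|h|$), it writes a jump SDE directly for $e^{\xi\lambda_{m,i}(t)}$, observing that each aggregation event multiplies $e^{\xi\lambda_{m,i}}$ by a factor controlled via $e^{\xi LH}$. Taking expectations, using the stochastic intensity property, exchangeability of replicas, a decorrelation step to split $\E[e^{\xi\lambda_{m,i}(s)}\lambda_{m,j}(s)]$, and the \emph{first}-moment bound from Lemma~\ref{lem_moment_bound_cFIAP}, a single application of Gr\"onwall's lemma to $t\mapsto\E[e^{\xi\lambda_{m,i}(t)}]$ closes the estimate. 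In other words, the paper works with the exponential generating function directly and never needs quantitative control of $\E[\lambda_{m,i}^p(t)]$ in $p$; your approach pushes all the difficulty into exactly that control.

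That shift is where the gap lies, and you flag it yourself without closing it. The schematic bound $\E[\lambda_{m,i}^{p}(t)]\leq e^{\beta p T}\E[\lambda_{m,i}^{p}(0)]+(\gamma T)^{p}p!$ is asserted, not established. Lemma~\ref{lem_moment_bound_cFIAP} only supplies a polynomial $Q_p$ of degree $p$ with no stated control on how its coefficients depend on $p$, and the inductive proof sketched there would have to be reworked from scratch to track them. Your heuristic that the binomial weights $\binom{p+1}{k}$ arising from the jump expansion of $(\lambda+\Delta)^{p+1}$ only contribute geometrically (since $\sum_k\binom{p+1}{k}=2^{p+1}$) is plausible, as is the Yule-process analogy suggesting a $(\gamma T)^p p!$ shape; but the delicate point is the closure of the cross-moment hierarchy $\E[\lambda^{p+1-k}\mu]$ across the induction on $p$, and one must verify that the argmax/exchangeability device does not introduce an extra multiplicative factor that compounds with each induction step. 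Until that estimate is actually proved, the argument is a plan rather than a proof. The paper's treatment of $e^{\xi\lambda}$ as a single object is precisely the device that sidesteps this hierarchy; once the Taylor-series estimate is in hand, your final parameter choice (take $T$ small so $\xi_0\gamma T<1$, then shrink $\xi_0$ by $e^{-\beta T}$ to invoke Assumption~\ref{Ass_2_cFIAP}) is correct, and your handling of $\tilde\lambda_i$ via Lemma~\ref{lem_limit_moment_bound_cFIAP} mirrors the paper's.
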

\begin{proof}
To prove this result, we once again consider exchangeable dynamics without the autonomous evolution terms, by which we once again mean the two last integral terms of \eqref{eq_RMF_cFIAP}, using the same observation as previously, namely that nonexchangeable dynamics with these terms are stochastically dominated by exchangeable dynamics without them, to generalize the result. Note in addition that exchangeable dynamics with interaction functions $h_{j\rightarrow i}$ are dominated by exchangeable dynamics with interaction functions $|h_{j\rightarrow i}|$.
Let $\xi_0$ be as in Assumption \ref{Ass_2_cFIAP}. Let $t\in [0,T].$
From \eqref{eq_RMF_cFIAP}, let us write out the equation verified by $e^{\xi\lambda_{m,i}(t)}$:

\begin{equation*}
e^{\xi\lambda_{m,i}(t)}=e^{\xi\lambda_{m,i}(0)}+f\bigg(\sum_{j\neq i}\sum_{n\neq m}\int_0^t\one_{\{V^M_{(n,j)\rightarrow i}(s)=m\}}e^{\xi\lambda_{m,i}(s)}(e^{\xi h_{j\rightarrow i}(s)}-1)N_{n,j}(\diff s)\bigg).
\end{equation*}

Taking the expectation, using the stochastic intensity property \eqref{eq_stoch_int_cfiap} and the conditions on $f$ and $(h_{j\rightarrow i})$, we get
\begin{equation*}
\E[e^{\xi\lambda_{m,i}(t)}]\leq \E[e^{\xi\lambda_{m,i}(0)}]+\frac{1}{M-1}\sum_{j\neq i}\sum_{n\neq m}\int_0^t\E[e^{\xi\lambda_{m,i}(s)}(e^{\xi H}-1)\lambda_{n,j}(s)]\diff s.
\end{equation*}
Using exchangeability between replicas, this boils down to
\begin{equation*}
\E[e^{\xi\lambda_{m,i}(t)}]=\E[e^{\xi\lambda_{m,i}(0)}]+\sum_{j\neq i}\int_0^t\E[e^{\xi\lambda_{m,i}(s)}(e^{\xi H}-1)\lambda_{m,j}(s)]\diff s.
\end{equation*}
Since we are looking at dynamics without resets and with only nonnegative interactions, $\lambda_{m,i}(s)$ and $\lambda_{m,j}(s)$ are positively correlated. Therefore, we have
\begin{equation*}
\E[e^{\xi\lambda_{m,i}(t)}]\leq \E[e^{\xi\lambda_{m,i}(0)}]+(e^{\xi\mu}-1)\sum_{j\neq i}\int_0^t\E[e^{\xi\lambda_{m,i}(s)}]\E[\lambda_{m,j}(s)]\diff s.
\end{equation*}
By Lemma \ref{lem_moment_bound_cFIAP} and Assumption \ref{Ass_2_cFIAP}, we have the existence of a constant $B>0$ such that
\begin{equation*}
\E[e^{\xi\lambda_{m,i}(t)}]\leq \E[e^{\xi\lambda_{m,i}(0)}]+(e^{\xi\mu}-1)(K-1)B\int_0^t\E[e^{\xi\lambda_{m,i}(s)}]\diff s.
\end{equation*}
The desired result follows from Grönwall's lemma. The equivalent result for $\tilde{\lambda}_{m,i}(t)$ is obtained in the same way.
\end{proof}

\subsection*{Poisson approximation bound}
\label{subsec_Poisson_bound_cFIAP}
The goal of this section is to extend the bound obtained using the Chen-Stein method \cite{Chen75} in \cite{Davydov2022} to obtain a bound in total variation distance between the arrivals term \eqref{eq_arr_rep_cFIAP} and the limit sum of Poisson random variables.
Recall that \eqref{eq_arr_rep_cFIAP} states that for all $t \in [0,T], m \in \{1,\ldots,M\}, i\in \{1,\ldots,K\},$ 
\begin{equation*}
A_{m,i}(t)=\sum_{j \neq i}A_{j\rightarrow (m,i)}(t),
\end{equation*}
where for all $j\neq i,$
\begin{equation*}
A_{j\rightarrow (m,i)}(t)=\sum_{n \neq m}\sum_{k \in \hat{N}_{n,j}\cap [0,t]}h_{j\rightarrow i}(k)B^M_{k,(n,j)\rightarrow (m,i)}.
\end{equation*}

As a temporary assumption to obtain a bound in total variation distance using the Chen-Stein method, that will be relaxed later by using a density argument, we will hereafter consider that the functions $h_{j\rightarrow i}$ are simple, that is, that they are finite linear combinations of indicator functions. More precisely, we suppose there exist $p\geq 0, a_1,\ldots,a_p\in \R$ and $A_1,\ldots,A_p$ measurable subsets of $\R$ such that for all $t\in \R,$
\begin{equation*} 
h_{j\rightarrow i}(t)=\sum_{l=0}^p a_l\one_{A_l}(t).
\end{equation*}
Therefore, we can write
\begin{equation}
\label{eq_arrivals_simple}
A_{j \rightarrow (m,i)}(t)=\sum_{l=0}^p a_l\sum_{n\neq m}\sum_{k\in \hat{N}_{n,j}\cap([0,t]\cap A_l)} B^M_{k,(n,j)\rightarrow(m,i)}.
\end{equation}

By the complete independence property of the Poisson point process, we see that without loss of generality, we can assume that $h_{j\rightarrow i}$ is a constant.

This allows us to now simply reuse the result proved by the author in \cite[Lemma 2.10]{Davydov2022}, which we recall below, using notation consistent with \eqref{eq_arr_rep_cFIAP}, with the addition of a multiplicative constant $C$ in the bound to account for the function $h_{j\rightarrow i}$. Since what follows is done with $t \in [0,T]$ fixed, we will additionally denote $N_{n,j}([0,t])$ by $N_{n,j}$, continuing to omit the $M$ superscript to simplify notation. 
\begin{lemma}
\label{lem_poisson_chenstein_bound_cFIAP}
Let $M>1$. Let $(m,i) \in \{1,\ldots,M\}\times \{1,\ldots,K\}$. For $j \in \{1,\ldots,K\}\setminus\{i\}$, let $A_{j \rightarrow (m,i)}=\sum_{n \neq m}\sum_{k\leq N_{n,j}}h_{j\rightarrow i}(k)B_{k, (n,j) \rightarrow (m,i)}$ with $h_{j\rightarrow i}$ simple, and let $\tilde{A}_{j \rightarrow i}$ be independent Poisson random variables with means $\E[N_{1,j}]$. Then, there exists a positive finite constant $C$ such that
\begin{equation}
\label{eq_chen_poisson_bound_cFIAP}
\begin{split}
d_{TV}(A_{j \rightarrow (m,i)},\tilde{A}_{j \rightarrow i}) &\leq C\Bigg(\bigg(1\wedge\frac{0.74}{\sqrt{\E[N_{1,j}]}}\bigg)\frac{1}{M-1}\E\left[\left|\sum_{n \neq m}\left(\E[N_{n,j}]-N_{n,j}\right)\right|\right]\\
&+\frac{1}{M-1}\left(1\wedge \frac{1}{\E[N_{1,j}]}\right)\E[N_{1,j}]\Bigg).
\end{split}
\end{equation}
\end{lemma}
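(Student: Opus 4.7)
My plan is to reduce the lemma to the constant-$h$ case already handled in \cite[Lemma 2.10]{Davydov2022}, and then assemble the Chen--Stein bound exactly as there, with the multiplicative constant $C$ absorbing the magnitudes of the coefficients of the simple function.

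The first step is the reduction. Because $h_{j\rightarrow i}$ is simple, the decomposition \eqref{eq_arrivals_simple} expresses $A_{j\rightarrow(m,i)}(t)$ as a finite linear combination, with coefficients $a_l$, of arrival terms of the form
\begin{equation*}
A^{(l)}_{j\rightarrow(m,i)}(t) = \sum_{n\neq m}\sum_{k\in \widehat{N}_{n,j}\cap([0,t]\cap A_l)} B^M_{k,(n,j)\rightarrow(m,i)}.
\end{equation*}
By the complete independence property of the Poisson point process, the restrictions $\widehat{N}_{n,j}|_{A_l}$ for different $l$ are independent, and so the $A^{(l)}_{j\rightarrow(m,i)}(t)$ are independent with independent limits. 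Since total variation distance is subadditive under independent sums, it suffices to prove the bound for each $A^{(l)}_{j\rightarrow(m,i)}(t)$; equivalently, we may assume that $h_{j\rightarrow i}$ is a single constant, with any factor of $|a_l|$ absorbed into $C$.

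In this reduced setting, the structure of $A_{j\rightarrow(m,i)}(t)$ is identical to the one treated in \cite[Lemma 2.10]{Davydov2022}: it is a thinning of a Poisson-embedding-driven point process by the routing indicators $B^M_{k,(n,j)\rightarrow(m,i)}$. The key ingredient is Lemma \ref{lem_cond_indep_bern_cFIAP}: conditionally on $N=(N_{n,j})_{n\neq m}$, these indicators are i.i.d. Bernoulli with parameter $1/(M-1)$, so $A_{j\rightarrow(m,i)}(t)$ is conditionally a sum of $\sum_{n\neq m}N_{n,j}$ i.i.d. Bernoulli$(1/(M-1))$ random variables. The standard Chen--Stein bound \cite{Chen75} for a Binomial-to-Poisson comparison yields, conditionally on $N$,
\begin{equation*}
d_{TV}\!\left(A_{j\rightarrow(m,i)}(t)\,\big|\,N,\ \mathrm{Poi}\!\left(\tfrac{1}{M-1}\sum_{n\neq m}N_{n,j}\right)\right)\leq C\bigl(1\wedge \tfrac{1}{\E[N_{1,j}]}\bigr)\cdot\tfrac{1}{M-1},
\end{equation*}
and a triangle inequality comparing a Poisson with random mean $\tfrac{1}{M-1}\sum_{n\neq m}N_{n,j}$ to one with deterministic mean $\E[N_{1,j}]$ (using exchangeability of the replicas and the Chen--Stein bound $d_{TV}(\mathrm{Poi}(\mu),\mathrm{Poi}(\nu))\leq (1\wedge \tfrac{0.74}{\sqrt{\mu\wedge \nu}})|\mu-\nu|$) produces the first term, namely
\begin{equation*}
\bigl(1\wedge \tfrac{0.74}{\sqrt{\E[N_{1,j}]}}\bigr)\tfrac{1}{M-1}\E\Bigl[\Bigl|\sum_{n\neq m}(\E[N_{n,j}]-N_{n,j})\Bigr|\Bigr].
\end{equation*}
Taking expectations in the conditional Chen--Stein bound gives the second term.

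The only genuinely new point relative to \cite{Davydov2022} is the justification of the reduction from $h_{j\rightarrow i}$ Lipschitz and bounded to $h_{j\rightarrow i}$ constant; within the statement of the lemma itself $h_{j\rightarrow i}$ is already assumed simple, so this is done by the complete independence argument above, and the constant $C$ is updated to encompass $\max_l|a_l|$ and the (finitely many) coefficients. The step I expect to be the most delicate conceptually -- the density argument extending from simple $h$ to general bounded $h$ -- is explicitly postponed to the subsequent part of the paper, so it does not enter this proof.
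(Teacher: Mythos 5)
Your proposal follows the paper's route exactly: the same reduction from simple $h_{j\rightarrow i}$ to a constant via the decomposition \eqref{eq_arrivals_simple} and the complete independence property, followed by the Chen--Stein argument of \cite[Lemma 2.10]{Davydov2022}, conditioning on $N$ through Lemma \ref{lem_cond_indep_bern_cFIAP}. The paper itself simply cites that earlier lemma rather than re-deriving it, and your sketch of its two-term structure (conditional Binomial-to-Poisson bound, plus the random-mean versus fixed-mean Poisson comparison) is consistent with it.
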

We refer to \cite{Davydov2022} for the proof and comments on this result, which relies on the Chen-Stein method for Poisson approximation, using the conditional independence property proved in Lemma \ref{lem_cond_indep_bern_cFIAP} to condition on the random amount of aggregations.

Note that if the bound on the right-hand side goes to 0 when $M \rightarrow \infty,$ as we will endeavor to prove in the next section, then we will obtain convergence in total variation for all measurable functions $h_{j\rightarrow i},$ as any such function can be represented as a uniform limit of simple functions, and the uniform limit commutes with total variation convergence.

\subsection*{Decoupling arrivals and outputs: a fixed point scheme approach}
As we have seen in the Lemma \ref{lem_poisson_chenstein_bound_cFIAP}, for the Poisson approximation to hold, it is sufficient to prove a law of large numbers-type result on the random variables $(N_{n,j})_{n \neq m}$. However, since these random variables themselves depend on the random variables $(A_{j\rightarrow (m,i)})_{m \in \{1,\ldots,M\}}$, a direct proof seems difficult to obtain.

As such, we follow the approach of \cite{Davydov2022} and consider Equation \eqref{eq_RMF_cFIAP} as the fixed point equation of some function on the space of probability laws on the space of càdlàg trajectories. This fixed point exists and is necessarily unique due to the fact that Equation \eqref{eq_RMF_cFIAP} admits a unique solution. The main idea goes as follows: if we endow this space with a metric that makes it complete, in order to prove that the property of interest, namely that the aforementioned law of large numbers holds at the fixed point, it is sufficient to show that, on one hand, if this property holds for a given probability law, it also holds for its image by the function; and that on the other hand, the function's iterates form a Cauchy sequence. This approach is similar to the one developed in \cite{bdt_2022}, where propagation of chaos is proven in discrete time by showing that the one-step transition of the discrete dynamics preserves a triangular law of large numbers.

Our goal in this section is to prove the two aforementioned points. We start by introducing the metric space we will be considering and defining the function on it. Fix $T \in \R$, and let $D_T$ be the space of càdlàg functions on $[0,T]$ endowed with the Billingsley metric \cite{billingsley1968}: for $x,y \in D_T$, let $$d_{D_T}(x,y)=\inf_{\theta \in \Theta}\max(|||\theta|||,\|x-y\circ \theta\|),$$
where $$\Theta=\{\theta:[0,T]\rightarrow[0,T], \textrm{ s.t. } \theta(0)=0, \theta(T)=T, \textrm{ and } |||\theta|||<\infty \},$$ where $$|||\theta|||=\sup_{s\neq t \in [0,T]}\left|\log\left(\frac{\theta(t)-\theta(s)}{t-s}\right)\right|.$$ Intuitively, $\Theta$ represents all possible "reasonable" time shifts allowing one to minimize the effect of the jumps between the two functions $x$ and $y$, where "reasonable" means that all slopes of $\theta$ are close to 1.

We denote by $d_{D_T,U}$ the uniform metric on $D_T$: for $x,y \in D_T$, \begin{equation*}
d_{D_T,U}(x,y)=\|x-y\|.
\end{equation*} 
Note that we have for all $x,y \in D_T, d_{D_T}(x,y)\leq d_{D_T,U}(x,y)$, since the uniform metric corresponds precisely to the case where $\theta$ is the identity function.

Let $\mathcal{P}(D_T)$ be the space of probability measures on $D_T$. We endow it with the Kantorovitch metric \cite{Kanto42} (also known as the Wasserstein distance or the earth mover's distance): for $\mu,\nu \in \mathcal{P}(D_T)$, let 
\begin{equation*}
\mathcal{K}_T(\mu,\nu)=\inf_{\Pi \in D_T \times D_T}\E[ d_{D_T}(x,y)],
\end{equation*}
where $\Pi$ is a coupling s.t. $x \overset{\mathcal{L}}{=} \mu$ and $y\overset{\mathcal{L}}{=} \nu$. Finally, we fix $K,M \in \N$ and consider the space $(\mathcal{P}(D_T))^{MK}$ endowed with the 1-norm metric: for $\mu,\nu \in \mathcal{P}(D_T)$, let $$\mathcal{K}_T^{MK}(\mu,\nu)=\sum_{m=1}^M\sum_{i=1}^K \mathcal{K}_T(\mu_{m,i},\nu_{m,i}).$$

It is known that $(D_T,d_{D_T})$ is a complete separable metric space, see \cite{billingsley1968}, and thus
that $(\mathcal{P}(D_T),\mathcal{K}_T)$ and $(\mathcal{P}(D_T))^{MK},\mathcal{K}_T^{MK})$ are as well, see \cite{BogKol2012}.

We will also need to consider $\mathcal{P}(D_T)$ endowed with a Kantorovitch metric based on the uniform metric: we introduce for $\mu,\nu \in \mathcal{P}(D_T)$,  $$\mathcal{K}_{T,U}(\mu,\nu)=\inf_{\Pi \in D_T \times D_T} \E[d_{D_T,U}(x,y)],$$
where $\Pi$ is a coupling s.t. $x \overset{\mathcal{L}}{=} \mu$ and $y\overset{\mathcal{L}}{=} \nu$. We also introduce its product version $\mathcal{K}_{T,U}^{MK}$ defined analogously to above. Note that even though $(D_T,d_{D_T,U})$ is a complete metric space, it is not separable, therefore $(\mathcal{P}(D_T),\mathcal{K}_{T,U})$ is not a priori a complete metric space.

We now define the following mapping:

\begin{align*}
  \Phi \colon (\mathcal{P}(D_T))^{MK} &\to (\mathcal{P}(D_T))^{MK}\\
  \mathcal{L}(M)&\mapsto \Phi(\mathcal{L}(M)),
\end{align*}
\\
where $\mathcal{L}(M)$ is the law of $M$ and for all $(m,i)\in \{1,\ldots,M\}\times\{1,\ldots,K\}$, $\Phi(\mathcal{L}(M))$ is the law of the stochastic intensity $\lambda^\Phi_{m,i}$ of a point process $N^{\Phi}_{m,i}$ such that $\lambda^{\Phi}_{m,i}$ is the solution of the stochastic differential equation
\begin{equation}
\label{eq_input_output_map}
\begin{split}
\lambda^{\Phi}_{m,i}(t)&=\lambda^{\Phi}_{m,i}(0)+f(\sum_{j\neq i}\sum_{n \neq m}\int_0^t h_{j\rightarrow i}(s)\one_{\{V^M_{(n,j)\rightarrow i}(s)=m\}} M_{n,j\rightarrow i}(\diff s))\\
&+\int_0^t(g_i(s,\lambda^{\Phi}_{m,i}(s))-\lambda^{\Phi}_{m,i}(s))N^{\Phi}_{m,i}(\diff s)+\int_0^t(\sigma_i(s,\lambda^{\Phi}_{m,i}(s))-\lambda^{\Phi}_{m,i}(s))\diff s,
\end{split}
\end{equation}
where $(\lambda^{\Phi}_{m,i}(0))$ are random variables verifying Assumption \ref{Ass_2_cFIAP}. This is well-defined for the same reason \eqref{eq_RMF_cFIAP} is.

We formalize the law of large numbers we aim to prove as follows:
\begin{definition}
Let $M \in \N$. Let $(X^M_n)_{1 \leq n\leq M}$ be $M$-exchangeable random variables with finite expectation. We say they satisfy an $\mathcal{L}^1$ triangular law of large numbers, which we denote $\TLLN(X^M_n)$, if there exists $C>0$ such that
\begin{equation}
\label{eq_TLLN_cFIAP}
\E\left[\left|\frac{1}{M-1}\sum_{n=1}^M (X^M_n-\E[X^M_n])\right|\right]\leq \frac{C}{\sqrt{M}}
\end{equation}
and there exists a random variable $\tilde{X}$ and a constant $D>0$ such that for all $1\leq n_1 \neq n_2\leq M$ and for all $B_1, B_2\in \mathcal{B}(\R),$
\begin{equation}
\label{eq_TLLN_cFIAP_2}
|\P(X^M_{n_1} \in B_1, X^M_{n_2}\in B_2)-\P(\tilde{X} \in B_1)\P(\tilde{X} \in B_2)|\leq \frac{D}{\sqrt{M}}.
\end{equation}
\end{definition}

From \eqref{eq_chen_poisson_bound_cFIAP}, we know that if the triangular law of large numbers holds for the fixed point of $\Phi$, it allows for convergence in total variation of arrivals across replicas from a given neuron $j$ to a given neuron $i$ to a Poisson random variable. As such, our aim here is twofold:

\begin{enumerate}
\item Show that for all $(m,i)\in \{1,\ldots,M\}\times\{1,\ldots,K\}, \TLLN(\overline{N_{m,i}}([0,t]))$ implies \\$\TLLN(\Phi(\overline{N_{m,i}}([0,t])))$;
\item Show that $(\Phi^l)_{l \in \N^*}$ is a Cauchy sequence that converges to the fixed point.
\end{enumerate}
Since we can choose $\overline{N_{m,i}}([0,t])$ to be i.i.d. to ensure that there exist inputs for which the $\TLLN$ property holds, this will allow us to propagate the property and show that the $\TLLN$ property holds at the fixed point as well.

We will start by proving a lemma that will be key for the second point. The adaptation of this lemma from \cite{Davydov2022} to the more general framework is the main challenge in this part, but assumptions made on $f$ and Assumption \ref{Ass_main} allow this extension.

\begin{lemma}
\label{lem_sznitbound_cFIAP}
There exists $T>0$ such that for $\rho, \nu \in (\mathcal{P}(D_T))^{MK},$ there exists a constant $C_T>0$ such that
\begin{equation}
\label{eq_sznitbound_cFIAP}
K_{T,U}^{MK}(\Phi(\rho),\Phi(\nu))\leq C_T \int_0^T K_{t,U}^{MK}(\rho, \nu)\diff t.
\end{equation}
\end{lemma}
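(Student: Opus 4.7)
The plan is to produce a synchronous coupling of the two solutions of \eqref{eq_input_output_map} driven by inputs with laws $\rho$ and $\nu$, and to turn a resulting pathwise estimate into a bound on the Kantorovich distance of the outputs. The three ingredients driving the argument are the Lipschitz property of $f$ (with $f(0)=0$), the uniform bound $|h_{j\rightarrow i}|\leq H$, and Assumption \ref{Ass_main} together with Lemma \ref{lem_exp_moment_bound_cFIAP}, which provide the a priori control on the intensities that substitutes for the missing Lipschitz hypothesis on $g_i,\sigma_i$.

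First, on a common probability space I would realise $(M,M')$ with marginals $\rho,\nu$ via a coupling that is $\varepsilon$-near-optimal for $\mathcal{K}^{MK}_{T,U}$, and solve both copies of \eqref{eq_input_output_map} with these inputs while sharing the same Poisson embeddings $\{\overline{N}_{m,i}\}$ from Lemma \ref{lem_poisson_embedding}, the same routing processes $\{V^M_{(n,j)\rightarrow i}\}$, and the same initial data. Set $\delta_{m,i}(t):=|\lambda^\Phi_{m,i}(t)-\lambda^{\Phi'}_{m,i}(t)|$. Subtracting the two equations, the interaction block is bounded using $|f(x)-f(y)|\leq L_f|x-y|$ and $|h_{j\rightarrow i}|\leq H$ by $L_f H$ times the accumulated mismatch between $M$ and $M'$ up to time $t$ (first under the temporary simplification that $h_{j\rightarrow i}$ is a simple function, as in the paragraph preceding \eqref{eq_arrivals_simple}, then extended by density); exploiting the stochastic intensity of each input and its compensator rewrites this mismatch as $\int_0^t$ of an expression controlled by $\mathcal{K}^{MK}_{s,U}(\rho,\nu)$. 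The autonomous blocks contribute $(g_i(s,\lambda^\Phi)-\lambda^\Phi)-(g_i(s,\lambda^{\Phi'})-\lambda^{\Phi'})$ integrated against $N^\Phi_{m,i}$ and an analogous drift in $ds$; by Assumption \ref{Ass_main} the integrands are dominated pointwise in absolute value by $\delta_{m,i}(s)$, and compensating the Poisson part using the intensity bound from Lemma \ref{lem_exp_moment_bound_cFIAP} produces a term of the form $C_2\int_0^t\E[\delta_{m,i}(s)]\,ds$.

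Summing over $(m,i)$ and writing $u(t):=\sum_{m,i}\E[\sup_{s\leq t}\delta_{m,i}(s)]$ yields
\[
u(T)\leq C_1\int_0^T \mathcal{K}^{MK}_{t,U}(\rho,\nu)\,dt + C_2\int_0^T u(t)\,dt + \varepsilon.
\]
Grönwall's lemma absorbs the second right-hand side integral and produces $u(T)\leq C_T\int_0^T \mathcal{K}^{MK}_{t,U}(\rho,\nu)\,dt + C_T'\varepsilon$. Since $u(T)$ is realised on an admissible coupling of $(\Phi(\rho),\Phi(\nu))$, it is an upper bound for $\mathcal{K}^{MK}_{T,U}(\Phi(\rho),\Phi(\nu))$; sending $\varepsilon\to 0$ gives \eqref{eq_sznitbound_cFIAP} on any interval $[0,T]$ on which the a priori bounds of Lemmas \ref{lem_moment_bound_cFIAP}--\ref{lem_exp_moment_bound_cFIAP} are valid.

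The main obstacle is the control of the autonomous contributions $\int_0^t(g_i(s,\lambda)-\lambda)N^\Phi_{m,i}(ds)$ and $\int_0^t(\sigma_i(s,\lambda)-\lambda)\,ds$: they depend on $\lambda^\Phi$ rather than on the input $M$, and $g_i,\sigma_i$ are assumed only measurable with no explicit Lipschitz dependence on the state. It is the monotonicity encoded in Assumption \ref{Ass_main} combined with the exponential moment control of Lemma \ref{lem_exp_moment_bound_cFIAP} that allows these terms to be absorbed via Grönwall rather than being bounded by a Lipschitz constant. Moreover, recovering the precise integral form of the right-hand side of \eqref{eq_sznitbound_cFIAP}, rather than a cruder bound of the form $C_T\,\mathcal{K}^{MK}_{T,U}(\rho,\nu)$, is essential: it is what later makes the Picard iterates of $\Phi$ form a Cauchy sequence via the standard $T^n/n!$ decay of iterated time integrals, with no smallness-in-$T$ restriction required.
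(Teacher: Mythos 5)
Your high-level strategy (synchronous coupling through the shared Poisson embeddings and routing marks, pathwise subtraction, then Gr\"onwall in $u(t)=\sum_{m,i}\E[\sup_{s\le t}\delta_{m,i}(s)]$, and finally infimum over $\eps$-near-optimal input couplings) is the same as the paper's. The interaction block is also handled the same way: Lipschitz-$f$, $|h_{j\to i}|\le H$, and exchangeability over replicas combine to give $DH\int_0^T\mathcal K^{MK}_{s,U}(\rho,\nu)\,ds$.

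The gap is in the autonomous block. After compensating the Poisson integrals, the term you denote ``$C_2\int_0^t\E[\delta_{m,i}(s)]\,ds$'' is in fact of the form $\int_0^t\E\bigl[\sup_{z\le s}\delta_{m,i}(z)\,\cdot\,\bigl(\Phi(\rho)_{m,i}(s)\vee\Phi(\nu)_{m,i}(s)\bigr)\bigr]ds$: the difference $\delta_{m,i}$ appears \emph{multiplied by the stochastic intensity}, which is random and unbounded. Lemma \ref{lem_exp_moment_bound_cFIAP} gives finiteness of exponential moments, not a uniform pointwise bound on the intensity, so there is no constant $C_2$ allowing you to pull it out. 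The paper resolves this by introducing the truncation set $A_C([0,T])=\{(\omega,t):\ \Phi(\rho)_{m,i}(t)\vee\Phi(\nu)_{m,i}(t)>C\}$, which splits the integral into a piece bounded by $C\int_0^t\E[\sup_{z\le s}\delta_{m,i}(z)]\,ds$ and a remainder bounded by $K_T e^{-3CT}$ via the exponential moment control; after Gr\"onwall one obtains $\bigl(DH\int_0^T\mathcal K_{s,U}^{MK}\,ds + (K_T+K'_T)e^{-3CT}\bigr)e^{2(1+C)T}$ and the bound is closed by sending the correction to zero through the choice of $C$. This delicate competition between the $e^{-3CT}$ decay and the $e^{2(1+C)T}$ Gr\"onwall blowup is the crux of the lemma and is absent from your sketch. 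Relatedly, the assertion that Assumption \ref{Ass_main} makes the integrands ``dominated pointwise in absolute value by $\delta_{m,i}(s)$'' is not correct: $g_i(s,t)\le t$ is a one-sided monotone bound and says nothing about $|g_i(s,\lambda)-g_i(s,\lambda')|$; the actual estimate in the paper splits on $\{u\le\Phi(\rho)\wedge\Phi(\nu)\}$ versus the symmetric difference of the two superlevel sets, which is precisely what produces the $\Phi(\rho)\vee\Phi(\nu)$ factor that makes the truncation unavoidable.
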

\begin{proof}
Let $T>0.$ Let $t\in[0,T]$. Fix $(m,i) \in \{1,\ldots,M\}\times\{1,\ldots,K\}$. Let $N^{\rho}$ (resp. $N^{\nu},N^{\Phi(\rho)},N^{\Phi(\nu)})$ be a point process admitting $\rho$ (resp. $\nu,\Phi(\rho),\Phi(\nu)$) as a stochastic intensity.
We have from \eqref{eq_input_output_map}
\begin{equation*}
\begin{split}
&\Phi(\rho)_{m,i}(t)-\Phi(\nu)_{m,i}(t)=\\
&f\left(\sum_{j\neq i}\sum_{n \neq m}\left(\int_0^t h_{j\rightarrow i}(s)\one_{\{V^M_{(n,j)\rightarrow i}(s)=m\}}(N^{\rho}_{n,j}(\diff s)-N^{\nu}_{n,j}(\diff s))\right)\right)\\
&+\int_0^t (g_i(s,\Phi(\rho)_{m,i}(s))-\Phi(\rho)_{m,i}(s))N^{\Phi(\rho)}_{m,i}(\diff s)+\int_0^t (\sigma_i(s,\Phi(\rho)(s))-\Phi(\rho)_{m,i}(s))\diff s\\
&-\int_0^t (g_i(s,\Phi(\nu)_{m,i}(s))-\Phi(\nu)_{m,i}(s))N^{\Phi(\nu)}_{m,i}(\diff s)-\int_0^t (\sigma_i(s,\Phi(\nu)(s))-\Phi(\nu)_{m,i}(s))\diff s.
\end{split}
\end{equation*}
\newpage
Let $(\hat{N}_{m,i})_{(m,i) \in \{1,\ldots,M\}\times\{1,\ldots,K\}}$ be independent Poisson point processes with intensity 1 on $[0,T]\times\R^+$. 
Using the Poisson embedding construction \eqref{eq_rmf_coupling_cFIAP}, we can write
\begin{equation*}
\begin{split}
&\Phi(\rho)_{m,i}(t)-\Phi(\nu)_{m,i}(t)=\\
&f\left(\sum_{j\neq i}\sum_{n \neq m}\int_0^t\int_0^{+\infty}h_{j\rightarrow i}(s)\one_{\{V^M_{(n,j)\rightarrow i}(s)=m\}}(\one_{\{u\leq \rho_{n,j}(s)\}}-\one_{\{u\leq \nu_{n,j}(s)\}})\hat{N}_{n,j}(\diff s\diff u)\right)\\
&+\int_0^t\int_0^{+\infty}g_i(s,\Phi(\rho)_{m,i}(s))\one_{\{ u\leq \Phi(\rho)_{m,i}(s)\}}-g_i(s,\Phi(\nu)_{m,i}(s))\one_{\{ u\leq \Phi(\nu)_{m,i}(s)\}}\hat{N}_{m,i}(\diff s\diff u)\\
&+\int_0^t\int_0^{+\infty}\Phi(\nu)_{m,i}(s)\one_{\{ u\leq \Phi(\nu)_{m,i}(s)\}}-\Phi(\rho)_{m,i}(s)\one_{\{ u\leq \Phi(\rho)_{m,i}(s)\}}\hat{N}_{m,i}(\diff s\diff u)\\
&+\int_0^{t}(\Phi(\nu)_{m,i}(s)-\Phi(\rho)_{m,i}(s))\diff s+\int_0^t(\sigma_i(s,\Phi(\rho)_{m,i}(s))-\sigma_i(s,\Phi(\nu)_{m,i}(s)))\diff s.
\end{split}
\end{equation*}

Therefore, using the fact that $f$ is Lipschitz and Assumption \ref{Ass_main}, we have the existence of a constant $D>0$ such that
\begin{equation*}
\begin{split}
&\left|\Phi(\rho)_{m,i}(t)-\Phi(\nu)_{m,i}(t)\right|\leq\\
&D\sum_{j\neq i}\sum_{n \neq m}\int_0^t\int_0^{+\infty}h_{j\rightarrow i}(s)\one_{\{V^M_{(n,j)\rightarrow i}(s)=m\}}\one_{\{u\leq \sup_{z \in [0,s]}|\rho_{n,j}(z)-\nu_{n,j}(z)|\}}\hat{N}_{n,j}(\diff s\diff u)\\
&+2\int_0^t\int_0^{+\infty}\sup_{z \in [0,s]}|\Phi(\rho)_{m,i}(z)-\Phi(\nu)_{m,i}(z)|\one_{\{u\leq \Phi(\rho)_{m,i}(s)\wedge\Phi(\nu)_{m,i}(s)\}}\hat{N}_{m,i}(\diff s\diff u)\\
&+2\int_0^t\int_0^{+\infty}|\Phi(\rho)_{m,i}(s)\vee\Phi(\nu)_{m,i}(s)|\one_{\{u\leq \sup_{z \in [0,s]}|\Phi(\rho)_{m,i}(z)-\Phi(\nu)_{m,i}(z)|\}}\hat{N}_{m,i}(\diff s\diff u)\\
&+2\int_0^{t}|\Phi(\nu)_{m,i}(s)-\Phi(\rho)_{m,i}(s)|\diff s.
\end{split}
\end{equation*}
Taking expectations of both sides and using the stochastic intensity property \eqref{eq_stoch_int_cfiap}, we get
\begin{equation*}
\begin{split}
&\E\left[\sup_{t \in [0,T]}\left|\Phi(\rho)_{m,i}(t)-\Phi(\nu)_{m,i}(t)\right|\right]\leq \\
&\frac{DH}{M-1}\sum_{j\neq i}\sum_{n \neq m}\int_0^T\E\left[\sup_{z\in[0,s]}\left|\rho_{n,j}(z)-\nu_{n,j}(z)\right|\right]\diff s\\
&+2\int_0^T\E\left[\sup_{z \in [0,s]}\left|\Phi(\rho)_{m,i}(z)-\Phi(\nu)_{m,i}(z)\right|\left(\Phi(\rho)_{m,i}(s)\wedge \Phi(\nu)_{m,i}(s)\right)\right]\diff s\\
&+2\int_0^T\E\left[\sup_{z \in [0,s]}\left|\Phi(\rho)_{m,i}(z)-\Phi(\nu)_{m,i}(z)\right|\left(\Phi(\rho)_{m,i}(s)\vee \Phi(\nu)_{m,i}(s)\right)\right]\diff s\\
&+2\int_0^T\E\left[\sup_{z \in [0,s]}\left|\Phi(\rho)_{m,i}(z)-\Phi(\nu)_{m,i}(z)\right|\right]\diff s.
\end{split}
\end{equation*}

We then have
\begin{equation*}
\frac{DH}{M-1}\sum_{j\neq i}\mu_{j\rightarrow i}\sum_{n \neq m}\int_0^T\E\left[\sup_{z\in[0,s]}\left|\rho_{n,j}(z)-\nu_{n,j}(z)\right|\right]\diff s\leq DH\sum_{j=1}^K\sum_{n=1}^M\int_0^T d_{D_s,U}(\rho_{n,j},\nu_{n,j})\diff s,
\end{equation*}
from which we immediately get by definition of $\mathcal{K}_{T,U}^{MK}$
\begin{equation}
\label{eq_contracterm_cFIAP}
\frac{DH}{M-1}\sum_{j\neq i}\sum_{n \neq m}\int_0^T\E[\sup_{z\in[0,s]}|\rho_{n,j}(z)-\nu_{n,j}(z)|]\diff s\leq DH\int_0^T \mathcal{K}_{s,U}^{MK}(\rho,\nu)\diff s.
\end{equation}

Let $C>0$. As before, let $A_C([0,T])=\{(\omega,t) \in \Omega\times [0,T], \Phi(\rho)_{m,i}(t)\vee \Phi(\nu)_{m,i}(t)>C\}$.
Using the exact same reasoning as in \cite[Lemma 2.12]{Davydov2022}, we have the existence of a constant $K_T>0$ such that
\begin{equation*}
\begin{split}
&\int_0^t\E[\sup_{z \in [0,s]}|\Phi(\rho)_{m,i}(z)-\Phi(\nu)_{m,i}(z)|(\Phi(\rho)_{m,i}(s)\wedge \Phi(\nu)_{m,i}(s))]\diff s\\
&\leq C\int_0^t \E\left[\sup_{z\in[0,s]}\left|\Phi(\rho)_{m,i}(z)-\Phi(\nu)_{m,i}(z)\right|\right]\diff s+K_Te^{-3CT}.
\end{split}
\end{equation*}
Plugging in \eqref{eq_contracterm_cFIAP} and applying the same reasoning as above to the last integral term, we get the existence of a constant $K'_T>0$ such that
\begin{equation*}
\begin{split}
&\E\left[\sup_{t \in [0,T]}\left|\Phi(\rho)_{m,i}(t)-\Phi(\nu)_{m,i}(t)\right|\right]\leq DH\int_0^T \mathcal{K}_{s,U}^{MK}(\rho,\nu)\diff s\\
&+\left(2(1+C)\right)\int_0^T\E\left[\sup_{z \in [0,s]}\left|\Phi(\rho)_{m,i}(z)-\Phi(\nu)_{m,i}(z)\right|\right]\diff s\\
&+\left(K_T+K'_T\right)e^{-3CT}.
\end{split}
\end{equation*}
Applying Grönwall's lemma, we get
\begin{equation*}
\begin{split}
&\E\left[\sup_{t \in [0,T]}\left|\Phi(\rho)_{m,i}(t)-\Phi(\nu)_{m,i}(t)\right|\right]\leq \\
&\left(DH\int_0^T \mathcal{K}_{s,U}^{MK}(\rho,\nu)\diff s + \left(K_T+K'_T\right)e^{-3CT}\right)e^{2(1+C)T}.
\end{split}
\end{equation*}
For any $\eps>0$, we can choose $C>0$ such that
\begin{equation*}
\E\left[\sup_{t \in [0,T]}\left|\Phi(\rho)_{m,i}(t)-\Phi(\nu)_{m,i}(t)\right|\right]\leq \left(DH\int_0^T \mathcal{K}_{s,U}^{MK}(\rho,\nu)\diff s\right)e^{2(1+C)T}+\eps.
\end{equation*}
Letting $\eps$ go to 0 and taking the sum over all coordinates and the infimum across all couplings, we get the result.
\end{proof}

As previously mentioned, we need to prove convergence of the sequence of iterates of $\Phi$ to the fixed point of $\Phi$ to prove the triangular law of large numbers. We will now derive this from Lemma \ref{lem_sznitbound_cFIAP}.

\begin{lemma}
\label{lem_iter_conv}
The sequence $(\Phi^l)_{l \in \N^*}$ of iterates of the function $\Phi$ is a Cauchy sequence. Moreover, it converges to the unique fixed point of $\Phi$.
\end{lemma}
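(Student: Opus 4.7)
The plan is a standard Picard/Banach-type argument, with one subtlety: Lemma~\ref{lem_sznitbound_cFIAP} is phrased in the non-separable (hence not a priori complete) metric $\mathcal{K}_{T,U}^{MK}$, whereas completeness is only available for the Billingsley-based metric $\mathcal{K}_T^{MK}$. The strategy is to prove the Cauchy property in the stronger metric $\mathcal{K}_{T,U}^{MK}$ and then transfer convergence to $\mathcal{K}_T^{MK}$ using the pointwise domination $d_{D_T}\leq d_{D_T,U}$, which yields $\mathcal{K}_T^{MK}\leq \mathcal{K}_{T,U}^{MK}$.

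First, I would iterate Lemma~\ref{lem_sznitbound_cFIAP}. Fix an arbitrary starting law $\rho\in(\mathcal{P}(D_T))^{MK}$ and set $b_l(t)=\mathcal{K}_{t,U}^{MK}(\Phi^{l+1}(\rho),\Phi^l(\rho))$. Noting that the constant produced in the proof of Lemma~\ref{lem_sznitbound_cFIAP} is monotone in the time horizon (so $C_s\leq C_T$ for $s\leq T$), the lemma yields $b_l(s)\leq C_T\int_0^s b_{l-1}(u)\,\diff u$ for every $s\leq T$. By a direct induction this gives the standard Picard bound
\begin{equation*}
b_l(T)\leq \frac{(C_T T)^l}{l!}\,b_0(T).
\end{equation*}
The base step $b_0(T)=\mathcal{K}_{T,U}^{MK}(\Phi(\rho),\rho)<\infty$ is finite as soon as one picks $\rho$ with finite first moment on the supremum norm on $[0,T]$, which is guaranteed by choosing e.g.\ $\rho$ to be the law of the null trajectory together with Lemma~\ref{lem_moment_bound_cFIAP} (applied to $\Phi(\rho)$ via the coupled Poisson-embedding representation \eqref{eq_rmf_coupling_cFIAP}). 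Summing the geometric-factorial bound gives $\sum_l b_l(T)<\infty$, so $(\Phi^l(\rho))_l$ is Cauchy in $(\mathcal{P}(D_T))^{MK}$ endowed with $\mathcal{K}_{T,U}^{MK}$.

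Next, since $d_{D_T}(x,y)\leq d_{D_T,U}(x,y)$, we have $\mathcal{K}_T^{MK}\leq \mathcal{K}_{T,U}^{MK}$, so $(\Phi^l(\rho))_l$ is also Cauchy in the complete metric space $((\mathcal{P}(D_T))^{MK},\mathcal{K}_T^{MK})$ and therefore converges to some $\rho^\star\in (\mathcal{P}(D_T))^{MK}$. To identify $\rho^\star$ with the fixed point of $\Phi$, I would not reprove continuity in $\mathcal{K}_T^{MK}$; instead I would argue that the RMF dynamics \eqref{eq_RMF_cFIAP} admit a unique strong solution, hence $\Phi$ has a unique fixed point $\rho_{\infty}$, and then use the Picard bound again: for any $l$,
\begin{equation*}
\mathcal{K}_{T,U}^{MK}(\Phi^l(\rho),\rho_\infty)=\mathcal{K}_{T,U}^{MK}(\Phi^l(\rho),\Phi^l(\rho_\infty))\leq \frac{(C_T T)^l}{l!}\,\mathcal{K}_{T,U}^{MK}(\rho,\rho_\infty),
\end{equation*}
which tends to $0$ as $l\to\infty$ (after possibly shrinking $T$ to ensure the right-hand side is finite, again via the moment bounds of Lemma~\ref{lem_moment_bound_cFIAP}). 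Combined with Cauchyness this forces $\rho^\star=\rho_\infty$, and the same inequality gives uniqueness directly: if $\nu^\star$ were another fixed point, $\mathcal{K}_{T,U}^{MK}(\rho_\infty,\nu^\star)\leq\frac{(C_TT)^l}{l!}\mathcal{K}_{T,U}^{MK}(\rho_\infty,\nu^\star)\to 0$.

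The main obstacle in carrying this out is not the contraction arithmetic but the interplay between the two Kantorovich metrics: one must do the work in $\mathcal{K}_{T,U}^{MK}$ (because that is where Lemma~\ref{lem_sznitbound_cFIAP} lives) while appealing to completeness only in $\mathcal{K}_T^{MK}$. The clean way to thread this needle, as above, is to avoid proving convergence to a fixed point by a contraction argument in an incomplete space, and instead use the a priori existence and uniqueness of the fixed point (from the well-posedness of \eqref{eq_RMF_cFIAP}) plus the Picard inequality to directly bound the distance $\mathcal{K}_{T,U}^{MK}(\Phi^l(\rho),\rho_\infty)$. A secondary technical check is that the time horizon $T$ chosen earlier (in Lemmas~\ref{lem_exp_moment_bound_cFIAP} and~\ref{lem_sznitbound_cFIAP}) suffices for the finiteness of $\mathcal{K}_{T,U}^{MK}(\rho,\rho_\infty)$, which follows from the exponential moment bound of Lemma~\ref{lem_exp_moment_bound_cFIAP}.
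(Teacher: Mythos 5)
Your proposal is correct and is essentially the same argument the paper invokes by reference to Sznitman's classical Picard-iteration scheme: iterate the Grönwall-type bound of Lemma~\ref{lem_sznitbound_cFIAP} to obtain the factorial decay $b_l(T)\leq (C_TT)^l b_0(T)/l!$, deduce Cauchyness, and then transfer from the (non-complete) uniform Kantorovitch metric to the complete Billingsley one via $d_{D_T}\leq d_{D_T,U}$ before identifying the limit with the unique fixed point furnished by well-posedness of \eqref{eq_RMF_cFIAP}. The one point to keep an eye on is that iterating the lemma requires the contraction inequality for all horizons $s\leq T$ (not just the endpoint $T$), which indeed follows from the proof of Lemma~\ref{lem_sznitbound_cFIAP} since the supremum there is taken over $[0,t]$ for each $t$; you note this correctly via the monotonicity of the constant.
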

We refer to \cite[Lemma 2.13]{Davydov2022} for the proof of this result, which is inspired by the classical approach of Sznitman, see \cite{Szn89}.

All that remains is proving that the triangular law of large numbers is carried over by the function $\Phi$, namely, that if we have some input $X$ that verifies $\TLLN(X)$, then we have $\TLLN(\Phi(X)).$

To do so, the key lemma will be the following law of large numbers. 

\begin{lemma}
\label{lem_tlln_cFIAP}
Let $M \in \N^*$. Let $(X_1^M, \ldots, X^M_M)$ be $M$-exchangeable centered random variables with finite exponential moments. Suppose that there exists $D>0$ such that for all $N \in \N^*$, all $B_1,B_2 \in \mathcal{B}(\R)$ and all $1\leq l,p\leq N$, $|\P(X^M_l \in B_1,X^M_p \in B_2)-\P(\tilde{X}_l\in B_1,\tilde{X}_p\in B_2)|\leq \frac{D}{\sqrt{M}}$, where $(\tilde{X}_i)_{i \in \N^*}$ are i.i.d. random variables and the convergence takes place in distribution. Then, there exists $C>0$ such that
\begin{equation}
\label{eq_tlln_cFIAP_lem}
\E\left[\left|\frac{1}{M}\sum_{n=1}^M X^M_n\right|\right] \leq \frac{C}{\sqrt{M}}.
\end{equation}

\end{lemma}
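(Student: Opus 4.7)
The plan is to pass from the $\mathcal{L}^1$ bound to a second-moment bound via Jensen's inequality, then exploit $M$-exchangeability together with the hypothesis on pair joint distributions. Setting $\bar X_M = \frac{1}{M}\sum_{n=1}^M X_n^M$, Jensen gives $\E[|\bar X_M|] \leq \sqrt{\E[\bar X_M^2]}$, and by exchangeability,
\begin{equation*}
\E[\bar X_M^2] = \frac{1}{M}\E[(X_1^M)^2] + \frac{M-1}{M}\E[X_1^M X_2^M].
\end{equation*}
The first term is $O(1/M)$: taking $B_2 = \R$ in the hypothesis gives marginal closeness of $X_1^M$ to $\tilde X_1$ at rate $1/\sqrt M$, which combined with the finite exponential moments bounds $\E[(X_1^M)^2]$ by a constant uniformly in $M$.

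The crux is the pair covariance $\E[X_1^M X_2^M]$. The key observation is that $\tilde X_1, \tilde X_2$ are independent and centered: marginal closeness at rate $1/\sqrt M$ plus uniform integrability from the exponential moments forces $\E[\tilde X] = \lim_M \E[X_1^M] = 0$, hence $\E[\tilde X_1 \tilde X_2] = 0$ by independence. Therefore
\begin{equation*}
\E[X_1^M X_2^M] = \E[X_1^M X_2^M] - \E[\tilde X_1 \tilde X_2] = \int xy\, d\nu_M(x,y),
\end{equation*}
where $\nu_M$ is the signed measure equal to the joint law of $(X_1^M, X_2^M)$ minus the product of the marginals of $\tilde X_1$ and $\tilde X_2$, whose rectangle discrepancy is bounded by $D/\sqrt M$ by hypothesis. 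The integral is then estimated by truncating at a logarithmic level $T_M$ (so that the tail contributions of $|X_1^M X_2^M|$ are suppressed via Cauchy--Schwarz and the exponential moment bound) and approximating $(x,y)\mapsto xy$ on $[-T_M,T_M]^2$ by a step function whose coefficients plug directly into the rectangle hypothesis.

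The main obstacle is extracting from this scheme a covariance bound of order $1/M$, so that the off-diagonal contribution matches the diagonal scale and yields the claimed $C/\sqrt M$ rate for $\E[|\bar X_M|]$. A naive truncation-approximation yields only a bound of order $T_M^2/\sqrt M$ on the covariance, which would fall short of the desired sharpness. Reaching the right rate requires a careful balancing of the truncation level and the approximation mesh, exploiting in tandem the exchangeability, the centering of $\tilde X$, and the exponential-moment bounds to absorb the logarithmic or polynomial factors that the crude estimate would introduce.
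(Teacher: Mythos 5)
Your proposal follows the same route as the paper: reduce to bounding $\E[\bar X_M^2]$ via Jensen/Cauchy--Schwarz, split by exchangeability into a diagonal term of order $1/M$ and an off-diagonal covariance term, and control the covariance $\E[X_1^M X_2^M]$ using the pair-distribution hypothesis together with independence and centering of the $\tilde X_i$.

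The obstacle you flag is, however, real and not merely a matter of "careful balancing," and it is worth spelling out why. The hypothesis is a total-variation-type bound of order $1/\sqrt M$ on the pair laws, so after any truncation argument the best one extracts is $|\E[X_1^M X_2^M]| = O(1/\sqrt M)$ (indeed even $O(T_M^2/\sqrt M)$ once the unbounded integrand is handled, as you note). That yields $\E[\bar X_M^2] = O(1/\sqrt M)$, and Jensen then gives $\E[|\bar X_M|] = O(M^{-1/4})$, not the claimed $O(M^{-1/2})$. To close the gap via the second moment one would need the covariance at order $1/M$, which the stated hypothesis does not supply. You correctly identify this but do not provide the missing ingredient. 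For what it is worth, the paper's own proof stops at exactly the same place: it obtains $\E[U_M^2] \leq Z_2/M + M(M-1)Z_1/(M^2\sqrt M)$, applies Chebyshev, and then concludes by appealing to "uniformly bounded second moments," which gives convergence in $L^1$ but not the quantitative rate $C/\sqrt M$ asserted in \eqref{eq_tlln_cFIAP_lem}. So your diagnosis of the difficulty is accurate and in fact sharper than the paper's treatment; what you have not done is complete the proof, and under the hypotheses as literally stated it is unclear that the $1/\sqrt M$ rate is reachable by the second-moment route at all. Your truncation scheme for the unbounded integrand $xy$ (logarithmic level, exponential-moment tail control, step-function approximation against the rectangle bound) is also more explicit than the paper's one-line invocation of dominated convergence, though it remains a sketch and, as you observe, the $T_M^2$ factor is precisely what must be absorbed.
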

\begin{proof}
Let $$U_M=\frac{1}{M}\sum_{n=1}^M X^M_n.$$ Note that $\E[U_M]=0,$
and we have
\begin{equation*}
\E[U_M^2]=\frac{1}{M^2}\E\left[\sum_{n=1}^M(X^M_n)^2+\sum_{m,n=1,m \neq n}^M X^M_m X^M_n\right].
\end{equation*}
Since the exponential moments of $(X^M_n)$ are finite by Lemma \ref{lem_exp_moment_bound_cFIAP} and all the $X^M_n$ converge in distribution to $\tilde{X}^M_n$ by assumption, using dominated convergence, we have the existence of $Z_1>0$ such that for any $m,n$,
\begin{equation*}
|\E[X^M_m X^M_n]-\E[\tilde{X}_m \tilde{X}_n]|\leq \frac{Z_1}{\sqrt{M}}.
\end{equation*} 
By independence of $(\tilde{X}^M_n)$, $\E[\tilde{X}_m \tilde{X}_n]=\E[\tilde{X}_m]\E[\tilde{X}_n]=0$.

Therefore, by exchangeability, setting $Z_2=\sup_i\E[(X^M_i)^2]$, we have
\begin{equation*}
\E[U_M^2] \leq \frac{1}{M^2}(Z_2M+M(M-1)\frac{Z_1}{\sqrt{M}}).
\end{equation*}
Now, applying Chebychev's inequality, for any $\delta >0,$
\begin{equation*}
\P(|U_M-\E[U_M]|>\delta) \leq \frac{\E[U_M^2]}{\delta^2}\leq \frac{Z_2}{M\delta^2}+\frac{M(M-1)Z_1}{M^2\sqrt{M}\delta^2}.
\end{equation*}
This gives convergence in probability of $U_M$ with the desired speed when $M\rightarrow\infty$.

Since in addition the second moments are uniformly bounded, the desired result \eqref{eq_tlln_cFIAP_lem} follows.
\end{proof}

The following lemma is the last step needed to prove the main theorem:
\begin{lemma}
\label{lem_tlln_Phi_cFIAP}
Let $(\overline{N_{m,i}})$ be point processes on $[0,T]$ with finite exponential moments. Let $t \in [0,T].$ Suppose $\TLLN((\overline{N_{m,i}}([0,t]))$ holds. Then, \\
$\TLLN(\Phi((\overline{N_{m,i}}([0,t])))$ holds as well.
\end{lemma}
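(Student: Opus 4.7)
The plan is to verify the two defining conditions of $\TLLN$ for the family $(N^{\Phi}_{m,i}([0,t]))_{1\leq m\leq M}$, where $N^{\Phi}_{m,i}$ are the output point processes from \eqref{eq_input_output_map}. Observe that by Lemma \ref{lem_tlln_cFIAP}, once the pairwise approximate-independence condition \eqref{eq_TLLN_cFIAP_2} is established at rate $1/\sqrt{M}$ and finite exponential moments are in hand through Lemma \ref{lem_exp_moment_bound_cFIAP}, the $L^1$ bound \eqref{eq_TLLN_cFIAP} follows automatically. Hence the essential work is to establish \eqref{eq_TLLN_cFIAP_2} at the level of the $\Phi$-images, after which the first condition comes for free.

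For the pairwise bound, I would fix $i\in\{1,\ldots,K\}$ and two distinct replica indices $m_1\neq m_2$, and use the Poisson embedding representation \eqref{eq_input_output_map} to write each $N^{\Phi}_{m_k,i}$ as a measurable functional of three pieces: (a) the initial condition $\lambda^{\Phi}_{m_k,i}(0)$; (b) the Poisson embedding $\hat{N}_{m_k,i}$ coding its autonomous jumps and drift; and (c) the routed aggregate $\sum_{n\neq m_k}\sum_{j\neq i}\int_0^{\cdot} h_{j\rightarrow i}(s)\one_{\{V^M_{(n,j)\rightarrow i}(s)=m_k\}}\overline{N_{n,j}}(\diff s)$. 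Pieces (a)-(b) are independent across replicas by the construction on $\Omega$, so all residual dependence between $N^{\Phi}_{m_1,i}([0,t])$ and $N^{\Phi}_{m_2,i}([0,t])$ must flow through (c).

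To handle (c), I would split the outer sum according to whether $n\in\{m_1,m_2\}$ or not. The first case contributes only $O(1/M)$ in total variation, since for a fixed source replica only one of the two target events $\{V=m_1\}$, $\{V=m_2\}$ can be triggered and each occurs with probability $\frac{1}{M-1}$. For $n\notin\{m_1,m_2\}$, the routing variables $V^M_{(n,j)\rightarrow i}$ are uniform on $\{1,\ldots,M\}\setminus\{n\}$ and disjointly select $m_1$ or $m_2$ with equal probability $\frac{1}{M-1}$. Conditioning on the point counts of $\overline{N_{n,j}}$ as in Lemma \ref{lem_cond_indep_bern_cFIAP} decouples these routings, so the joint law of the pair of aggregates reduces to the joint law of the underlying counts $(\overline{N_{n,j}}([0,t]))_{n\neq m_1,m_2,\ j\neq i}$, to which the hypothesis $\TLLN((\overline{N_{m,i}}([0,t])))$ applies, composed with independent Bernoulli thinnings into two disjoint target slots. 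A Chen--Stein coupling argument in the spirit of Lemma \ref{lem_poisson_chenstein_bound_cFIAP} then produces the $1/\sqrt{M}$ rate for the pairwise total variation bound on the two aggregates, and this bound propagates to the outputs via the measurable map defined by \eqref{eq_input_output_map}, whose continuity is ensured by the Lipschitz property of $f$ and by Assumption \ref{Ass_main}, very much in the style of Lemma \ref{lem_sznitbound_cFIAP}.

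The main obstacle, as in the corresponding argument of \cite{Davydov2022}, is to preserve the $1/\sqrt{M}$ rate simultaneously through the Chen--Stein decomposition of the routed arrivals and through the nonlinear dynamics defining $\Phi$, rather than losing it to a polynomial factor when invoking Grönwall. The general $(f,g,\sigma)$ framework of cFIAPs forces one to replace the explicit linear estimates available in the excitatory Galves--Löcherbach case by the Lipschitz hypothesis on $f$ together with the stochastic domination afforded by Assumption \ref{Ass_main}, and to use the exponential moment bound of Lemma \ref{lem_exp_moment_bound_cFIAP} to close the Grönwall step. Once this is done, applying Lemma \ref{lem_tlln_cFIAP} to the exchangeable centered sequence $(N^{\Phi}_{m,i}([0,t])-\E[N^{\Phi}_{m,i}([0,t])])_{1\leq m\leq M}$ delivers \eqref{eq_TLLN_cFIAP} and closes the proof.
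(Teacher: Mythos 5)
Your plan follows essentially the same route as the paper's proof: reduce the claim via Lemma \ref{lem_tlln_cFIAP} and the exponential moment bound of Lemma \ref{lem_exp_moment_bound_cFIAP} to the pairwise total-variation estimate \eqref{eq_TLLN_cFIAP_2}, then establish that estimate by conditioning on the point counts of the source processes (Lemma \ref{lem_cond_indep_bern_cFIAP}), decoupling the routing variables, invoking the Chen--Stein bound of Lemma \ref{lem_poisson_chenstein_bound_cFIAP}, and pushing through the deterministic map $\Phi$. These are exactly the ingredients used by the paper.

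Two minor remarks on presentation. First, the paper organizes the argument slightly differently: it first proves the joint bound for pairs of source-neuron indices $l_1\neq l_2$ routed to the \emph{same} target $(m,i)$, deduces TV convergence of the full aggregate $\sum_{j\neq i}\overline{A_{j\rightarrow(m,i)}}$, applies the mapping theorem, and then declares the replica-pair case $(\lambda_{m,i},\lambda_{m',i})$ to be ``analogous''; you instead attack the replica pair $(m_1,m_2)$ directly and note the $O(1/M)$ contribution when the source replica is itself $m_1$ or $m_2$, which is a reasonable and somewhat more explicit refinement of that ``analogous'' step. Second, your appeal to the Lipschitz property of $f$ and Assumption \ref{Ass_main} to ``ensure continuity'' of the map from aggregates to $\lambda^\Phi$ is not quite what is needed here: propagation of total-variation convergence through the solution map is a consequence of measurability alone (the mapping theorem), since the embedding $\hat{N}_{m,i}$ and the initial condition are independent of the arrival term. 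The Lipschitz hypothesis and Assumption \ref{Ass_main} are instead the ingredients that drive the Wasserstein contraction in Lemma \ref{lem_sznitbound_cFIAP}, which is a separate piece of the overall fixed-point argument.
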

\begin{proof}
Most of the proof is analogous to that of \cite[Lemma 2.15]{Davydov2022}, the only differences coming from the fact that we now propagate convergence speed as well. We nevertheless write it out in full to introduce notation for the part that differs, namely the proof of the convergence in total variation of a sum knowing that each of the summands converges at the desired speed.

Suppose $\TLLN((\overline{N_{m,i}}([0,t]))$ holds. For $(m,i)\in \{1,\ldots,M\}\times\{1,\ldots,K\}$, let $\overline{\lambda_{m,i}}$ be the stochastic intensity of the process $\overline{N_{m,i}}$.
We write for all $t \in [0,T]$,
\begin{equation*}
\lambda_{m,i}(t)=\lambda_{m,i}(0)+\sum_{j \neq i}\overline{A_{j\rightarrow (m,i)}}(t)+\int_0^t (r_i-\lambda_{m,i}(s))N_{m,i}(\diff s),
\end{equation*}
where $(\lambda_{m,i}(0))$ verifies Assumption \ref{Ass_2_cFIAP} and 
\begin{equation*}
\overline{A_{j\rightarrow (m,i)}}(t)=\sum_{n \neq m}\int_0^t h_{j\rightarrow i}(s)\one_{\{V^M_{(n,j)\rightarrow i}(s)=m\}}\overline{N_{n,j}}(\diff s).
\end{equation*}

Analogously to \eqref{eq_chen_poisson_bound_cFIAP}, we have
\begin{equation*}
\begin{split}
&d_{TV}(\overline{A_{j\rightarrow (m,i)}}(t),\tilde{A}_{j \rightarrow i}(t)) \leq\\ &\left(1\wedge\frac{0.74}{\sqrt{\E[\overline{N_{1,j}}\left([0,T]\right)]}}\right)\frac{1}{M-1}\E[|\sum_{n \neq m}\left(\E[\overline{N_{n,j}}\left([0,T]\right)]-\overline{N_{n,j}}\left([0,T]\right)\right)|]\\
&+\frac{1}{M-1}\left(1\wedge \frac{1}{\E[\overline{N_{1,j}}\left([0,T]\right)]}\right)\E[\overline{N_{1,j}}\left([0,T]\right)].
\end{split}
\end{equation*}
where $\tilde{A}_{j \rightarrow i}$ are independent Poisson random variables with mean $\E[N_{1,j}([0,T])]$.

As such, $\TLLN((\overline{N_{m,i}}([0,t]))$ implies convergence in total variation of $(\overline{A_{j\rightarrow (m,i)}})$ to independent random variables $(\tilde{A}_{j \rightarrow i})$ with the desired $\sqrt{M}$ speed.

As in \cite{Davydov2022}, it remains to show that the convergence in total variation of $\sum_{j \neq i}\mu_{j\rightarrow i}\overline{A_{j\rightarrow (m,i)}}$ knowing that each of the summands converges at the desired speed.

Denote as before $\overline{N}=(\overline{N}_{n,j})_{n\neq m, j \neq i}.$ Let $q\in \N^{(M-1)(K-1)}.$ Let $B_1,B_2\in \mathcal{B}(\R^+).$ Let $l_1\neq l_2 \in \{1,\ldots,K\}\setminus \{i\}.$
Then, using the total probability formula, we have
\begin{equation*}
\begin{split}
&\P(\overline{A}_{l_1\rightarrow(m,i)}\in B_1,\overline{A}_{l_2\rightarrow(m,i)}\in B_2)=\\
&\sum_q \P(\overline{A}_{l_1\rightarrow(m,i)}\in B_1,\overline{A}_{l_2\rightarrow(m,i)}\in B_2|\overline{N}=q)\P(\overline{N}=q).
\end{split}
\end{equation*}
Using Lemma \ref{lem_cond_indep_bern_cFIAP}, by conditional independence, we have
\begin{equation*}
\begin{split}
&\P(\overline{A}_{l_1\rightarrow(m,i)}\in B_1,\overline{A}_{l_2\rightarrow(m,i)}\in B_2)=\\
&\sum_q \P(\overline{A}_{l_1\rightarrow(m,i)}\in B_1|\overline{N}=q)\P(\overline{A}_{l_2\rightarrow(m,i)}\in B_2|\overline{N}=q)\P(\overline{N}=q).
\end{split}
\end{equation*}
Using the same reasoning as in Lemma \ref{lem_poisson_chenstein_bound_cFIAP} and $\TLLN((\overline{N_{m,i}}))$, we have the existence of constants $C_1,C_2,C_3>0$ such that 
\begin{equation*}
|\P(\overline{A}_{l_1\rightarrow(m,i)}\in B_1|\overline{N}=q)-\P(\tilde{A}_{l_1\rightarrow i}\in B_1|\overline{N}=q)|\leq \frac{C_1}{\sqrt{M}},
\end{equation*} 
\begin{equation*}
|\P(\overline{A}_{l_2\rightarrow(m,i)}\in B_2|\overline{N}=q)-\P(\tilde{A}_{l_2\rightarrow i}\in B_2|\overline{N}=q)|\leq \frac{C_2}{\sqrt{M}}
\end{equation*} and 
\begin{equation*}
|\P(\overline{N}=q)-\P(\overline{\tilde{N}}=q)|\leq \frac{C_3}{\sqrt{M}}.
\end{equation*}
Moreover,
\begin{equation*}
\P(\tilde{A}_{l_1\rightarrow i}\in B_1|\overline{N}=q)=\P(\tilde{A}_{l_1\rightarrow i}\in B_1)
\end{equation*}
and
\begin{equation*}
\P(\tilde{A}_{l_2\rightarrow i}\in B_2|\overline{N}=q)=\P(\tilde{A}_{l_2\rightarrow i}\in B_2).
\end{equation*}
By dominated convergence, since

\begin{equation*}
\sum_q \P(\tilde{A}_{l_1\rightarrow i}\in B_1)\P(\tilde{A}_{l_2\rightarrow i}\in B_2)\P(\overline{N}=q)=\P(\tilde{A}_{l_1\rightarrow(m,i)}\in B_1,\tilde{A}_{l_2\rightarrow(m,i)}\in B_2),
\end{equation*}
we have the existence of a constant $D>0$ such that
\begin{equation*}
|\P(\overline{A}_{l_1\rightarrow(m,i)}\in B_1,\overline{A}_{l_2\rightarrow(m,i)}\in B_2)-\P(\tilde{A}_{l_1\rightarrow(m,i)}\in B_1,\tilde{A}_{l_2\rightarrow(m,i)}\in B_2)|\leq \frac{D}{\sqrt{M}}.
\end{equation*}
This implies convergence of $\sum_j \mu_{j\rightarrow i}\overline{A}_{j\rightarrow(m,i)}$ with the same convergence speed.
Finally, the mapping theorem implies convergence in total variation of $\lambda_{m,i}(t)$ when $M \rightarrow \infty$ with the same convergence speed, and, analogously, for couples $(\lambda_{m,i}(t),\lambda_{m',i}(t))$. All conditions of Lemma \ref{lem_tlln_cFIAP} are thus satisfied. Applying it completes the proof.   
\end{proof}
Thus, we can now state the result that we were aiming to prove:
\begin{lemma}
Denote by $(N_{m,i})$ the point processes of the $M$-replica RMF dynamics \eqref{eq_RMF_cFIAP} that are the fixed point of $\Phi$. Then $\TLLN((N_{m,i}([0,T])))$ holds.
\end{lemma}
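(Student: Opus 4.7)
The plan is to combine Lemma \ref{lem_iter_conv} with Lemma \ref{lem_tlln_Phi_cFIAP} to propagate $\TLLN$ from a conveniently chosen input all the way to the fixed point of $\Phi$. First, I would pick an initial element $X^{(0)} \in (\mathcal{P}(D_T))^{MK}$ whose components are, for each fixed $i$, i.i.d. in the replica index $m$, for instance taking homogeneous Poisson processes with a bounded deterministic intensity. For such $X^{(0)}$ the classical i.i.d. $L^1$ law of large numbers immediately yields $\TLLN(X^{(0)}_{m,i}([0,t]))$ with explicit constants depending only on the chosen intensity, while the two-point decoupling condition \eqref{eq_TLLN_cFIAP_2} holds exactly.

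Next I would iterate: set $X^{(l+1)} := \Phi(X^{(l)})$. By direct induction using Lemma \ref{lem_tlln_Phi_cFIAP}, $\TLLN(X^{(l)}_{m,i}([0,t]))$ holds for every $l \in \N$. The crucial observation is that the constants produced by one application of Lemma \ref{lem_tlln_Phi_cFIAP} depend only on the Chen--Stein constants of Lemma \ref{lem_poisson_chenstein_bound_cFIAP}, which are in turn controlled by the moments $\E[N_{1,j}([0,T])]$, together with the moment and exponential moment bounds of Lemmas \ref{lem_moment_bound_cFIAP} and \ref{lem_exp_moment_bound_cFIAP}. These latter bounds depend only on the initial conditions (Assumption \ref{Ass_2_cFIAP}) and on the horizon $T$, not on the intermediate distribution $X^{(l)}$. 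This allows one to keep the TLLN constants bounded uniformly in $l$.

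Finally, Lemma \ref{lem_iter_conv} tells us that $(X^{(l)})_{l \in \N}$ is Cauchy in the metric $\mathcal{K}_{T,U}^{MK}$ and converges to the unique fixed point of $\Phi$, which is precisely the law of the RMF dynamics $(N_{m,i})$. Because the Kantorovich convergence is built on the uniform topology on $D_T$, the marginals $X^{(l)}_{m,i}([0,t])$ converge to $N_{m,i}([0,t])$ in law, with uniform integrability supplied by the exponential moment bound of Lemma \ref{lem_exp_moment_bound_cFIAP}. The TLLN bounds \eqref{eq_TLLN_cFIAP} and \eqref{eq_TLLN_cFIAP_2}, holding at every iterate with constants uniform in $l$, therefore pass to the limit, yielding $\TLLN((N_{m,i}([0,T])))$.

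The step I expect to be the main obstacle is precisely this uniformity of the TLLN constants along the iteration: if an application of $\Phi$ could inflate the constant by a factor bounded away from one at each step, the limiting argument would collapse. Ruling this out requires tracking the constants in Lemmas \ref{lem_poisson_chenstein_bound_cFIAP} and \ref{lem_tlln_Phi_cFIAP} explicitly and verifying that they depend only on the structural parameters $H,K,T$ of Definition \ref{def_cFIAP} and on the moments bounded once and for all by Lemmas \ref{lem_moment_bound_cFIAP} and \ref{lem_exp_moment_bound_cFIAP}, rather than on the law of the current iterate.
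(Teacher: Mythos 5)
Your proposal follows the paper's own roadmap almost exactly: the paper explicitly lays out the two-step program (propagation of $\TLLN$ under $\Phi$ via Lemma~\ref{lem_tlln_Phi_cFIAP}, Cauchy convergence of iterates via Lemma~\ref{lem_iter_conv}) and then closes the argument by initializing with i.i.d.\ processes and passing to the fixed point; the paper's lemma ``proof'' is itself just a citation to the earlier work, replacing limits by explicit inequalities. You correctly identify that the delicate point is uniform control of the $\TLLN$ constants along the iteration, and your remark that these ultimately rest on Lemmas~\ref{lem_moment_bound_cFIAP} and~\ref{lem_exp_moment_bound_cFIAP} together with the structural constants $H$, $K$, $T$ is the right mechanism: in Lemma~\ref{lem_tlln_cFIAP}, the output constant in \eqref{eq_tlln_cFIAP_lem} is controlled by $Z_2=\sup_i\E[(X^M_i)^2]$ and the decoupling constant, both uniformly bounded, rather than being inflated multiplicatively by the input $\TLLN$ constant. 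One small point worth making explicit when you pass the second condition \eqref{eq_TLLN_cFIAP_2} to the limit: Kantorovich convergence does not in general yield total-variation-type bounds, but here the variables $N_{m,i}([0,t])$ are integer-valued, so convergence in law (supplied by the pathwise $L^1$ coupling underlying Lemma~\ref{lem_sznitbound_cFIAP}) is equivalent to total-variation convergence by Scheff\'e, which licenses the limit.
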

The proof is identical to \cite[Lemma 2.16]{Davydov2022}, replacing limits in $M$ by inequalities.

\section{Link between RMF FIAPs in discrete and continuous times~: the example of the excitatory Galves-Löcherbach model}
\label{sec_RMF_GL_to_RMF_FIAP}
FIAPs, and replica-mean-field versions of FIAPs, were originally introduced in discrete time in \cite{bdt_2022}. One natural question is to explore the links between these original FIAPs and the cFIAPS introduced in this work. The goal of this section is to show a link between replica-mean-field versions of continuous-time and discrete-time FIAPs for a specific instance of FIAP~: the excitatory Galves-Löcherbach model. In this particular case, the goal is to prove the existence of the horizontal equivalences in the following diagram:
\[\xymatrix{
  (\textrm{infinite-replica FIAP with }\textrm{-time step})_{\delta >0}  \ar[r] & \textrm{infinite-replica GL cFIAP} \ar[l] \\
 ( M\textrm{-replica FIAP with } \delta \textrm{-time step})_{\delta >0} \ar[u]^{M \rightarrow \infty} \ar[r] & M\textrm{-replica excitatory GL cFIAP} \ar[u]^{M \rightarrow \infty} \ar[l]}
\]
The left up arrow corresponds to the proof of the Poisson Hypothesis for a collection of discrete-time FIAPs as introduced in \cite{bdt_2022} with time-step $\delta$, for all $\delta>0$. The right up arrow corresponds to the proof of the Poisson Hypothesis for the cFIAP excitatory GL model, which was presented in \cite{Davydov2022}. In this section we complete the diagram by showing that it is possible to construct the discrete-time RMF FIAPs given the continuous-time dynamics, and vice-versa.

We recall the definition of discrete-time FIAPs originally introduced in \cite{bdt_2022}:
\begin{definition}
\label{def_FIAP_recall}
An instance of the class $\Cset$ of discrete
\textit{fragmentation-interaction-aggregation processes} 
is determined by:
\begin{itemize}
\item An integer $K$ representing the number of nodes;
\vspace{-5pt}
\item A collection of initial conditions for the integer-valued state variables at step zero, which we denote by $\{X_{i}\}$, where $i \in \{1,\ldots,K\}$; 
\vspace{-5pt}
\item A collection of fragmentation random variables $\{U_{i}\}$, which are i.i.d. uniform in $[0,1]$ and independent from $\{X_{i}\}$, where $i \in \{1,\ldots,K\}$; 
\vspace{-5pt}
\item A collection of {\em fragmentation functions} $\{g_{1,i} : \mathbb{N} \rightarrow \mathbb{N}\}_{i \in \{1,\ldots,K\}}$ \\
and $\{g_{2,i} : \mathbb{N} \rightarrow \mathbb{N}\}_{i \in \{1,\ldots,K\}}$; 
\vspace{-5pt}
\item A collection of bounded {\em interaction functions} $\{h_{j\rightarrow i} : \mathbb{N} \rightarrow \mathbb{N}\}_{i,j \in \{1,\ldots,K\}}$; 
\vspace{-5pt}
\item A collection of {\em activation probabilities} $\{\sigma_i(0), \sigma_i(1),\ldots\}_{i\in \{1,\ldots,K\}}$ verifying the 
conditions $\sigma_i(0)=0$,  and
$0<\sigma_i(1)\le \sigma_i(2)\le \cdots\le 1$ for all $i$.
\end{itemize}
The associated dynamics take as input 
the initial integer-valued state variables $\{X_{i}\}$
and define the state variables at the next step as
\begin{equation}
\label{eq_gendef_1_recall}
Y_{i}=g_{1,i}(X_{i})\one_{\{U_{i}<\sigma_i(X_{i})\}}+g_{2,i}(X_{i})\one_{\{U_{i}>\sigma_i(X_{i})\}}+A_{i}, \quad \forall i=1,\ldots,K,
\end{equation}
with arrival processes 
\begin{equation}
\label{eq_gendef_2_recall}
A_{i}=\sum_{j \neq i} h_{j\rightarrow i}(X_{j})\one_{\{U_{j}<\sigma_j(X_{j})\}},
\quad \forall i=1,\ldots,K.
\end{equation}
\end{definition}
Given a FIAP, we now consider its replica-mean-field model. We once again recall the precise definition from \cite{bdt_2022}:
\begin{definition}
For any process in $\Cset$, the associated $M$-replica dynamics is entirely specified by 
\begin{itemize}
\item A collection of initial conditions for the integer-valued state variables at step zero, which we denote by $\{X^{M}_{n,i}\}$, where $n \in \{1,\ldots,M\}$ and $i \in \{1,\ldots,K\}$, such that for all $M, n$ and $i$, $X^M_{n,i}=X_i$; 
\vspace{-5pt}
\item A collection of fragmentation random variables $\{U_{n,i}\}$, which are i.i.d. uniform in $[0,1]$ and independent from $\{X^{M}_{n,i}\}$, where $n \in \{1,\ldots,M\}$ and $i \in \{1,\ldots,K\}$; 
\vspace{-5pt}
\item A collection of i.i.d. \textit{routing} random variables $\{R^M_{(n,j)\rightarrow i}\}$ independent from $\{X^{M}_{n,i}\}$ and $\{U_{n,i}\}$, uniformly distributed on  $\{1,\ldots,M\}\setminus\{m\}$ for all $i,j \in \{1,\ldots,K\}$ and $m \in \{1,\ldots,M\}$. In other words, if $R^M_{(n,j)\rightarrow i}=n$, then an eventual activation of node $j$ in replica $m$ at step 0 induces an arrival of size $h_{j\rightarrow i}(X^M_{m,j})$ in node $i$ of replica $n$, and $n$ is chosen uniformly among replicas and independently from the state variables. Note that these variables are defined regardless of the fact that an activation actually occurs.
Also note that for $i'\ne i$, the activation in question will 
induce an arrival in node $i'$ of replica $n'$, with $n'$
sampled in the same way but independently of $n$.
\end{itemize}
Then, the integer-valued state variables at step one, denoted by $\{Y^{M}_{n,i}\}$, are given by the $M$-RMF equations
\begin{equation}
\label{eq_gen_rmf_1_recall}
Y^{M}_{n,i}=g_{1,i}(X^{M}_{n,i})\one_{\{U_{n,i}<\sigma_i(X^{M}_{n,i})\}}+g_{2,i}(X^{M}_{n,i})\one_{\{U_{n,i}>\sigma_i(X^{M}_{n,i})\}}+A^M_{n,i},
\end{equation}
where $g_{1,i}$, $g_{2,i}$ denotes fragmentation functions, $\sigma_i$ denotes activation probabilities, and where
\begin{equation}
\label{eq_gen_rmf_2_recall}
A^M_{n,i}=\sum_{m \neq n}\sum_{j \neq i} h_{j\rightarrow i}(X^M_{m,j})\one_{\{U_{m,j}<\sigma_i(X^{M}_{m,j})\}}\one_{\{R^M_{(n,j)\rightarrow i}=n\}}
\end{equation}
is the number of arrivals to node $i$ of replica $n$ via the interaction functions $h_{j\rightarrow i}$. 
\end{definition}

We will focus here on the case of the excitatory Galves-Löcherbach model (we will omit excitatory hereafter), in both discrete and continuous time settings, which we will now recall.

In the continuous time setting, the $M$-replica-mean-field of the Galves-Löcherbach model is defined as follows:
\begin{equation}
\label{eq_RMF_GL_SDE}
\begin{split}
\lambda^M_{m,i}(t)&=\lambda^M_{m,i}(0)+\frac{1}{\tau_i}\int_0^t\left(b_i-\lambda^M_{m,i}(s)\right)\diff s \\
&+\sum_{j \neq i}\mu_{j\rightarrow i}\sum_{n \neq m} \int_0^t \one_{\{V^M_{(n,j)\rightarrow i}(s)=m\}} N^M_{n,j}(\diff s)+\int_0^t \left(r_i-\lambda^M_{m,i}(s)\right) N^M_{m,i}(\diff s).
\end{split}
\end{equation}

In the discrete time setting with time step length $\delta$, the one-step transition of the RMF GL FIAP is given by
\begin{equation}
\label{eq_rmf1_recall}
Y^{M}_{m,i}=\one_{\{U_{m,i}>\sigma_{\delta}(X^{M}_{m,i})\}} X^{M}_{m,i}+\one_{\{U_{m,i}>\sigma_{\delta}(X^{M}_{m,i})\}}r_i+A^M_{m,i},
\end{equation}
where
\begin{equation}
\label{eq_rmf2_recall}
A^M_{m,i}=\sum_{n \neq m}\sum_{j \neq i} \mu_{j\rightarrow i}\one_{ \{U_{n,j}<\sigma_{\delta}(X^{M}_{n,j})\}}\one_{\{R^M_{n,j,i}=m\}}
\end{equation}
is the number of arrivals to neuron $i$ of replica $m$, $X^M_{m,i}$ is the state of neuron $i$ in replica $m$ at time 0 and $Y^M_{m,i}$ is its state at time 1.

In FIAP models, the arrivals to a given neuron at a given time are conditionally independent from the spiking activity of that neuron given the states at that time. Since in a continuous-time model all events are asynchronous, such a property is no longer verified. Thus, in order to map the continuous-time model to a FIAP, we must ``separate'' the arrivals and the spikes. In order to do that, we introduce a $\delta >0 $ unit of time. We then show that one can construct a discrete-time Markov chain that is similar to the embedded Markov chain of the continuous time model but which belongs to the RMF FIAP class. 

Since in FIAPs, all the states taken by the state variables are discrete, we must make the following simplifying assumption:

\begin{assumption}
\label{Ass_3}
\hspace{0.5cm}
\begin{itemize}
\item For all $i \in  \{1,\ldots,K\}$, $\tau_i=\infty$ (no exponential decay) and $r_i \in \N^*$;
\item For all $i,j \in \{1,\ldots,K\}$, $\mu_{j\rightarrow i} \in \N$.
\end{itemize}
\end{assumption}

Under Assumption \ref{Ass_3}, it is known (see \cite{Baccelli_2019}) that the generator of the $M$-replica dynamics is given by 

\begin{equation*}
\mathcal{A}[f](\mathbf{\lambda})=\sum_{i=1}^K\sum_{n=1}^M \frac{1}{|\mathcal{V}_{m,i}|}\sum_{v \in \mathcal{V}_{m,i}} \left( f(\mathbf{\lambda}+\mathbf{\mu}_{m,i,v}(\mathbf{\lambda}))-f(\mathbf{\lambda})\right)\lambda_{m,i},
\end{equation*}
where the update due to the spiking of neuron $(m,i)$ is defined by
\begin{equation*}
\left[\mathbf{\mu}_{m,i,v}(\mathbf{\lambda})\right]_{n,j}= 
\begin{cases}
\mu_{j \rightarrow i} & \text{ if } j \neq i, n=v_j \\
r_i - \lambda_{m,i} & \text{ if } j=i,n=m \\
0 & \text{ otherwise }.
\end{cases}
\end{equation*}

We now consider the embedded discrete-time Markov chain of the RMF GL dynamics, where all updates happen at the spiking times of the dynamics.
Since the spiking times are all distinct, all the transitions of the embedded Markov chain are given by
\begin{equation*}
\xymatrix{
(\lambda_{1,1},...,\lambda_{1,K},\lambda_{2,1},\ldots,\lambda_{M,K}) \ar[d]\\ 
(\lambda_{1,1},\ldots,\lambda_{m_1,1}+\mu_{i,1},\ldots,\lambda_{m,i-1},r_i,\lambda_{m,i+1},\ldots,\lambda_{M,K})
}
\end{equation*}
with associated transition probabilities $p^M_{m,i}\left(\frac{1}{M-1}\right)^{K-1}$,
where $p^M_{m,i}$ is the probability that neuron $(m,i)$ spikes conditioned on the event that a spike happens. The main complexity with this Markov chain is that the transition times correspond to the spiking times of the RMF GL network, which are not tractable.

Therefore, we now define a new discrete time Markov chain on $\N^{MK}$ with steps in time of fixed length $\delta$. The informal idea is to reset all neurons that spike in the RMF GL dynamics during such a step in time and update all the states with the potential due to these spikes. Since a single neuron could very well spike multiple times in a $\delta$ unit of time, we only consider the updates due to the first spike of a given neuron.

Note that the informal idea given above is the motivation behind the definition of the following discrete time dynamics, which is defined \textit{ad hoc}. We characterize the dynamics of this chain through its transitions, and we will show that the chain we define belongs to the class of discrete time RMF FIAPs.

In order to simplify notation and facilitate understanding, let us define the following ``half-step'' fragmentation and aggregation transitions $\mathcal{P}^A(\delta)$ and $\mathcal{P}^F(\delta)$.
We define $\mathcal{P}^F(\delta)$ as the transitions
\begin{equation*}
\xymatrix{
\mathcal{P}^F(\delta):(\lambda_{1,1},\ldots,\lambda_{1,K},\lambda_{2,1},\ldots,\ldots,\lambda_{M,K}) \ar[d]\\ 
\left(\lambda_{1,1},\ldots,r_{i_1},\ldots,r_{i_2},\ldots,r_{i_L},\ldots,\lambda_{M,K}\right)
}
\end{equation*}

where the transition probability is given by
\begin{equation*}
\prod_{(m,i) \in \mathcal{L}} \ p^M_{m,i}(\delta) \prod_{(m,i) \notin \mathcal{L}} \left(1-p^M_{m_k,i_k}(\delta) \right),
\end{equation*}
where $p^M_{m,i}(\delta)$ is the probability that neuron $(m,i)$ spikes, i.e., is set to the value $r_i$, in a $\delta$-unit of time and $\mathcal{L}=\{(m_1,i_1),\ldots,(m_L,i_L)\}.$ 

Informally, this transition corresponds to a \textit{fragmentation} of the state: we reset to their base rate all the neurons that spike during the $\delta$ step of time. Note that $p^M_{m,i}(\delta)=1-e^{-\sigma(\lambda_{m,i})\delta}$, where for all $k$, $\sigma(k)$ is the probability that a neuron of the RMF GL network in state $k$ spikes in a unit of time in length 1.

We then define the \textit{aggregation} transitions $\mathcal{P}^A(\delta)$ in the following fashion~: for all $1 \leq k \leq L$, each neuron $(m_k,i_k)$ which has spiked and been reset to its base rate $r_{i_k}$ in the previous step, for each $j \neq i_k$, we randomly, uniformly and independently from each other and from the rest of the dynamics, choose an index $n_k \neq m_k$ and increment neuron $(n_k,j)$ by $\mu_{i_k,j}$. Thus, all transitions are of the form
\begin{equation*}
\xymatrix{
\mathcal{P}^A(\delta):\left(\mathcal{P}^F(\delta)\left(\lambda_{1,1},\ldots,\lambda_{M,K}\right)\right) \ar[d]\\ 
\left(\lambda_{1,1},\ldots,\lambda_{n_{i_1},1}+\mu_{i_1,1},\ldots,\lambda_{n_{i_k,j}}+\mu_{i_k,j},\ldots,\lambda_{M,K}\right)
}
\end{equation*}
with transition probability $\left(\frac{1}{M-1}\right)^{(K-1)L}$ (conditioned on the transition probability of $\mathcal{P}^F(\delta))$. Note that $(n_{i_k})$ are not necessarily distinct for $k' \neq k$. Note that all the routings are done independently from one another and that updates to a single neuron are independent of whether that particular neuron has spiked or not.

We can then define the full transitions of our new discrete time Markov chain as 
\begin{equation}
\label{eq_delta_markov}
\left(\lambda_{1,1},\ldots,\lambda_{M,K}\right) \longmapsto \mathcal{P}^A(\delta)\circ \mathcal{P}^F(\delta)\left(\lambda_{1,1},\ldots,\lambda_{M,K}\right).
\end{equation}
By the total probability formula, the transition probability is given by 
\begin{equation}
\label{eq_delta_fiap_trans}
\sum_{l=1}^{MK}\sum_{\substack{J \subset \{1,\ldots,M\}\times \{1,\ldots,K\}\\ |J|=l}}\prod_{(n,j) \in J}p^M_{n,j}(\delta)\prod_{(n,j) \notin J}\left(1-p^M_{n,j}(\delta)\right)\left(\frac{1}{M-1}\right)^l.
\end{equation}
Our goal is now to show that the Markov chain defined this way belongs to the class of RMF FIAPs. In order to do that, we will require the following lemma, which gives the transition probability of a single coordinate of the above Markov chain.

\begin{lemma}
\label{Lem_trans_RMF_delta}
Let $k,l \in \N.$ Let $P^{M,m,i}_{k \rightarrow l}(\delta)$ be the probability that neuron $(m,i)$ following the RMF dynamics given above in \eqref{eq_delta_markov} transitions from state $k$ to state $l$ in a single step of time of length $\delta$.
Then
\begin{equation}
\label{eq_trans_RMF_delta_1}
P^{M,m,i}_{k \rightarrow l}=p^M_{m,i}(\delta)\P(A^M_{m,i}=l-r_i)+\left(1-p^M_{m,i}(\delta)\right)\P(A^M_{m,i}=l-k),
\end{equation}
where for all $l \in \N$,
\footnotesize
\begin{equation}
\label{eq_trans_RMF_delta_2}
\begin{split}
\P(A^M_{m,i}=l)&=\sum_{p=l}^{(K-1)(M-1)}\sum_{\substack{J \subset \{1,\ldots,M\}\setminus\{m\}\times \{1,\ldots,K\}\setminus\{i\}\\
\sum_{(n,j) \in J}\mu_{j\rightarrow i} =p}} \prod_{(n,j) \in J}p^M_{n,j}(\delta)\\
&\prod_{(n,j) \notin J}\left(1-p^M_{n,j}(\delta)\right)\left(\frac{1}{M-1}\right)^l\left(1-\frac{1}{M-1}\right)^{p-l}.
\end{split}
\end{equation}
\normalsize
Here $A^M_{m,i}$ are the updates due to arrivals to neuron $(m,i)$ in a $\delta$-unit of time.
\end{lemma}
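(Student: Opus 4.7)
\medskip
\noindent\textbf{Proof proposal.} The plan is to decompose one transition of the Markov chain \eqref{eq_delta_markov} into its fragmentation half-step $\mathcal{P}^F(\delta)$ followed by the aggregation half-step $\mathcal{P}^A(\delta)$, and invoke the law of total probability. Throughout, I will rely on the fact that by construction of $\mathcal{P}^F(\delta)$, the spiking indicators $(\one_{\{(n,j)\text{ spikes}\}})_{(n,j)}$ are mutually independent Bernoulli random variables with parameters $p^M_{n,j}(\delta)$, and that the routing variables in $\mathcal{P}^A(\delta)$ are independent of the spiking indicators and mutually independent of each other.

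For \eqref{eq_trans_RMF_delta_1}, I would condition on the event $\{(m,i)\text{ spikes}\}$ during $\mathcal{P}^F(\delta)$. This event has probability $p^M_{m,i}(\delta)$ (depending only on the current state $k$ via $\sigma(k)$), and on this event the state of $(m,i)$ after $\mathcal{P}^F(\delta)$ is $r_i$; on the complement it remains $k$. In $\mathcal{P}^A(\delta)$, the state of $(m,i)$ is then incremented by $A^M_{m,i}$, so the post-transition state is $r_i+A^M_{m,i}$ or $k+A^M_{m,i}$ respectively. The crucial observation is that $A^M_{m,i}$ is independent of the indicator that $(m,i)$ itself spikes: indeed, $A^M_{m,i}$ is a measurable function of the spiking indicators $(\one_{\{(n,j)\text{ spikes}\}})_{n\neq m,\,j\neq i}$ and of the routings $\{R^M_{(n,j)\to i}\}_{n\neq m,\,j\neq i}$, all of which are independent of the spiking indicator of $(m,i)$. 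Combining yields \eqref{eq_trans_RMF_delta_1}.

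For \eqref{eq_trans_RMF_delta_2}, I would further condition on the random set $J\subset(\{1,\ldots,M\}\setminus\{m\})\times(\{1,\ldots,K\}\setminus\{i\})$ of neurons that spike in $\mathcal{P}^F(\delta)$. By independence of the spiking indicators, $\P(\text{spiking set}=J)=\prod_{(n,j)\in J}p^M_{n,j}(\delta)\prod_{(n,j)\notin J}(1-p^M_{n,j}(\delta))$. Given $J$, each spiking neuron $(n,j)\in J$ independently routes its contribution of weight $\mu_{j\to i}$ to the replica $R^M_{(n,j)\to i}$, which is uniform on $\{1,\ldots,M\}\setminus\{n\}$ and hence equals $m$ with probability $\tfrac{1}{M-1}$. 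Writing $p=\sum_{(n,j)\in J}\mu_{j\to i}$ for the total potential mass directed toward node $i$ and summing over the independent routing indicators, the conditional law of $A^M_{m,i}$ given $J$ gives a binomial-type factor $\bigl(\tfrac{1}{M-1}\bigr)^l\bigl(1-\tfrac{1}{M-1}\bigr)^{p-l}$ for the event $\{A^M_{m,i}=l\}$. Summing over $J$ and over the values of $p$, and remarking that $p$ can range from $l$ up to the maximal possible mass $(K-1)(M-1)$, yields \eqref{eq_trans_RMF_delta_2}.

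The only subtle point in the argument is the independence between $A^M_{m,i}$ and the spiking of $(m,i)$ itself; everything else is a bookkeeping application of the total probability formula combined with independence of the fragmentation and routing randomness. I expect this independence to be the main (essentially only) observation to verify carefully, since the rest is a direct consequence of the two-stage structure of the chain.
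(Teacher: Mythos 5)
Your proposal is correct and follows essentially the same two-step strategy as the paper's own proof: independence of $(m,i)$'s spiking from the sources of $A^M_{m,i}$ (which you correctly identify as the spiking indicators of $(n,j)$ for $n\neq m$, $j\neq i$ and their routings, all disjoint from $(m,i)$'s own randomness) yields Eq.~\eqref{eq_trans_RMF_delta_1}, and conditioning on the spiking configuration followed by independence of the routing variables yields Eq.~\eqref{eq_trans_RMF_delta_2}. The only cosmetic difference is that the paper introduces the auxiliary variable $S^M_{m,i}$ (total potential mass) and conditions on $\{S^M_{m,i}=p\}$, whereas you condition directly on the spiking set $J$; regrouping your sum over $J$ by total weight $p=\sum_{(n,j)\in J}\mu_{j\to i}$ makes the two arguments identical.
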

\begin{proof}
Equation \eqref{eq_trans_RMF_delta_1} is due to the independence between the spiking of $(m,i)$ and the arrivals to $(m,i)$.
Let $S^M_{m,i}$ be the arrivals caused by spikes in the system in a $\delta$-unit of time, discounting the spikes in replica $m$ and the spikes in neuron $i$ across replicas. Informally, $S^M_{m,i}$ is the quantity from spikes that could potentially reach neuron $i$ in replica $m$ if the routing variables allow it.
Then 
\begin{equation}
\label{eq_tech_1}
\P(S^M_{m,i}=p)=\sum_{\substack{J \subset \{1,\ldots,M\}\setminus\{m\}\times \{1,\ldots,K\}\setminus\{i\}\\
\sum_{(n,j) \in J}\mu_{j\rightarrow i} =p}}\prod_{(n,j) \in J}p^M_{n,j}(\delta)\prod_{(n,j) \notin J}\left(1-p^M_{n,j}(\delta)\right).
\end{equation}
By the total probability formula,
\begin{equation}
\label{eq_tech_2}
\P(A^M_{m,i}=l)=\sum_{p=l}^{(M-1)(K-1)}\P(S^M_{m,i}=p)\P(A^M_{m,i}=l|S^M_{m,i}=p).
\end{equation}
Since the routings are independent of the rest of the process, we have
\begin{equation}
\label{eq_tech_3}
\P(A^M_{m,i}=l|S^M_{m,i}=p)=\left(\frac{1}{M-1}\right)^l\left(1-\frac{1}{M-1}\right)^{p-l}.
\end{equation}
Putting together \eqref{eq_tech_1},\eqref{eq_tech_2} and \eqref{eq_tech_3}, we get \eqref{eq_trans_RMF_delta_2}.
\end{proof}

Now we prove the following lemma:
\begin{lemma}
\label{Lem_corresp_RMF_FIAP}
The discrete-time Markov chain with transitions defined by \eqref{eq_delta_markov} is a RMF FIAP.
\end{lemma}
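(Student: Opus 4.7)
The plan is to exhibit a direct correspondence between the parameters of the constructed chain and the ingredients of a discrete-time RMF FIAP as listed in Definition \ref{def_FIAP_recall}, then check that both the marginal and joint updates match exactly. Concretely, I would set
\begin{equation*}
g_{1,i}(k) = r_i, \qquad g_{2,i}(k) = k, \qquad h_{j\rightarrow i}(k) = \mu_{j\rightarrow i}, \qquad \sigma_i(k) = 1 - e^{-\sigma(k)\delta},
\end{equation*}
and realize the spiking decisions and the routings through the i.i.d.\ uniforms $\{U_{m,i}\}$ and independent uniform routing variables $\{R^M_{(n,j)\rightarrow i}\}$ required by the RMF FIAP definition. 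Under this identification, $\mathcal{P}^F(\delta)$ flips exactly those coordinates with $U_{m,i} < \sigma_i(X^M_{m,i})$ to $g_{1,i}(X^M_{m,i}) = r_i$ and leaves the others at $g_{2,i}(X^M_{m,i}) = X^M_{m,i}$, while $\mathcal{P}^A(\delta)$ delivers the arrivals $A^M_{m,i}$ given by \eqref{eq_gen_rmf_2_recall} with the constant interaction functions $\mu_{j\rightarrow i}$.

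Next, I would check that the candidate activation probabilities satisfy the structural conditions of Definition \ref{def_FIAP_recall}: the Galves-L\"ocherbach intensity $\sigma$ is nondecreasing with $\sigma(0)=0$ and $\sigma(k)>0$ for $k\geq 1$, so $\sigma_i(0)=0$ and $0<\sigma_i(1)\leq\sigma_i(2)\leq\cdots\leq 1$ for each $i$, as required. The Poisson embedding interpretation also makes $p^M_{m,i}(\delta) = 1 - e^{-\sigma(X^M_{m,i})\delta}$ the exact probability that the corresponding continuous-time neuron fires at least once in a $\delta$-window given its state at the start of the window, which is the whole point of constructing the chain this way.

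Finally, it suffices to verify that the composed transition $\mathcal{P}^A(\delta)\circ\mathcal{P}^F(\delta)$ induces exactly the one-step law prescribed by \eqref{eq_gen_rmf_1_recall}-\eqref{eq_gen_rmf_2_recall}. On a single coordinate this is Lemma \ref{Lem_trans_RMF_delta}: \eqref{eq_trans_RMF_delta_1} is precisely the conditional decomposition according to whether $(m,i)$ spiked, and \eqref{eq_trans_RMF_delta_2} is obtained by the same total-probability expansion that the RMF FIAP update yields when one conditions on the set $J$ of spiking neurons and then on the routing outcomes. For the joint transition, the independence structure required by Definition \ref{def_FIAP_recall}, namely mutual independence of the uniforms $\{U_{m,i}\}$, of the routings $\{R^M_{(n,j)\rightarrow i}\}$, and between the two families, is built directly into the construction of $\mathcal{P}^F$ and $\mathcal{P}^A$, so it carries over verbatim.

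There is no real analytic obstacle here, since the lemma is essentially a bookkeeping statement. The only point that requires care is that the routings in $\mathcal{P}^A$ are declared per spiking neuron and per target node, and independently of the spike indicators: this is exactly the combinatorial structure present in \eqref{eq_gen_rmf_2_recall}, and the matching is therefore a direct translation between the two formalisms.
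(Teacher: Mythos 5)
Your proposal is correct and takes essentially the same approach as the paper: identify the constructed chain with an RMF FIAP and verify the transition laws coincide. Where the paper computes the single-coordinate transition probability of a generic RMF FIAP (equations \eqref{eq_trans_RMF_FIAP_1}--\eqref{eq_trans_RMF_FIAP_2}) and observes that \eqref{eq_trans_RMF_delta_1}--\eqref{eq_trans_RMF_delta_2} match after substituting $\sigma_\delta(k)=p^M(\delta)$, you make the underlying parameter identification $g_{1,i}\equiv r_i$, $g_{2,i}=\mathrm{id}$, $h_{j\to i}\equiv\mu_{j\to i}$, $\sigma_i(k)=1-e^{-\sigma(k)\delta}$ explicit and you check the structural monotonicity conditions and the joint independence structure, both of which the paper handles implicitly; this is a modest improvement in completeness, not a different route.
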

\begin{proof}
In order to achieve that, we compute the transition probabilities of RMF FIAPs and we show that the transition probabilities of the Markov chain defined above are of that type.

Consider the RMF GL FIAP model following dynamics given by \eqref{eq_rmf1_recall} and \eqref{eq_rmf2_recall}. Let $X=\{X^M_{m,i}\}$ be the state variables at step 0, let $\sigma_{\delta} : \N \rightarrow [0,1]$ be the spiking probabilities of the neuron satisfying the conditions given in Definition \ref{def_FIAP_recall}.

We now give the transition probabilities of the Markov RMF FIAP dynamics.
Let $k,l \in \N.$ Let $Q^{M,m,i}_{k \rightarrow l}$ be the probability that neuron $(m,i)$ in the RMF FIAP dynamics is in state $l$ at time $\delta$ given that it is in state $k$ at time 0. In other words, $Q^{M,m,i}_{k \rightarrow l}=\P(Y^M_{m,i}=l|X^M_{m,i}=k)$.
Then
\begin{equation}
\label{eq_trans_RMF_FIAP_1}
Q^{M,m,i}_{k \rightarrow l}=\sigma_{\delta}(X^M_{m,i})\P(A^M_{m,i}=l-r_i)+\left(1-\sigma_{\delta}(X^M_{m,i})\right)\P(A^M_{m,i}=l-k),
\end{equation}
where
\footnotesize{
\begin{equation}
\label{eq_trans_RMF_FIAP_2}
\begin{split}
\P(A^M_{m,i}=l)&= \sum_{p=l}^{(K-1)(M-1)}\sum_{\substack{J \subset \{1,\ldots,M\}\setminus\{m\}\times \{1,\ldots,K\}\setminus\{i\}\\
\sum_{(n,j) \in J}\mu_{j\rightarrow i} =p}}\prod_{(n,j) \in J}\sigma_{\delta}(X^M_{n,j})\\
&\prod_{(n,j) \notin J}\left(1-\sigma_{\delta}(X^M_{n,j})\right)\left(\frac{1}{M-1}\right)^l\left(1-\frac{1}{M-1}\right)^{p-l}.
\end{split}
\end{equation}
}
\normalsize
Here $A^M_{m,i}$ are the updates due to arrivals to neuron $(m,i)$ in a $\delta$-unit of time.

Note that in the case where $\mu_{j\rightarrow i}=1$ for all $i,j$, $A^M_{m,i}$ represents the number of arrivals to neuron $(m,i)$ in a $\delta$-unit of time.

Equation \eqref{eq_trans_RMF_FIAP_1} is simply due to the independence of arrivals and spikes in the RMF FIAP model. We now proceed identically to the proof of the particular case above. Let $S^M_{m,i}$ be the arrivals caused by spikes in the system in a $\delta$-unit of time, discounting the spikes in replica $m$ and the spikes in neuron $i$ across replicas. 
Then 
\begin{equation}
\label{eq_tech_4}
\P(S^M_{m,i}=p)=\sum_{\substack{J \subset \{1,\ldots,M\}\setminus\{m\}\times \{1,\ldots,K\}\setminus\{i\}\\
\sum_{(n,j) \in J}\mu_{j\rightarrow i} =p}}\prod_{(n,j) \in J}\sigma(X^M_{n,j})\prod_{(n,j) \notin J}\left(1-\sigma(X^M_{n,j})\right).
\end{equation}
By the total probabilities formula,
\begin{equation}
\label{eq_tech_5}
\P(A^M_{m,i}=l)=\sum_{p=l}^{(M-1)(K-1)}\P(S^M_{m,i}=p)\P(A^M_{m,i}=l|S^M_{m,i}=p).
\end{equation}
Since the routing variables are independent of the rest of the process, we have
\begin{equation}
\label{eq_tech_6}
\P(A^M_{m,i}=l|S^M_{m,i}=p)=\left(\frac{1}{M-1}\right)^l\left(1-\frac{1}{M-1}\right)^{p-l}.
\end{equation}
Putting together \eqref{eq_tech_4},\eqref{eq_tech_5} and \eqref{eq_tech_6}, we get \eqref{eq_trans_RMF_FIAP_2}.

As such, we see that the transition probabilities in our Markov chain model given by \eqref{eq_trans_RMF_delta_1} and \eqref{eq_trans_RMF_delta_2} are a particular case of these general FIAP transition probabilities\eqref{eq_trans_RMF_FIAP_1} and \eqref{eq_trans_RMF_FIAP_2}. This concludes the proof.
\end{proof}

In this way, we have shown that given a RMF GL continuous-time model, given the initial conditions, we can uniquely define a collection of RMF FIAP discrete-time models with varying time step lengths associated with it.

Note that a reverse construction is also possible in the following sense: given RMF FIAP dynamics of the type defined above for all $\delta >0$, since for all $k$, $\sigma_{\delta}(k)=\delta\sigma(k)+o(\delta)$, we can reconstruct the infinitesimal generator of the continuous-time dynamics by considering the transition operator $\frac{1}{\delta}\left(P_{\delta}-Id\right)$, where $Id$ is the identity operator and $P_{\delta}$ is the transition operator of the RMF FIAP dynamics with time steps of length $\delta$, and letting $\delta$ go to 0.



\begin{acks}[Acknowledgments]
The author would like to thank François Baccelli for his guidance and suggestions and Kavita Ramanan for her comments and suggestions.
\end{acks}

\begin{funding}
The author was supported by the ERC NEMO grant (\# 788851) to INRIA Paris and by the Office of Naval Research under the Vannevar Bush Faculty Fellowship N0014-21-1-2887. This work was supported by the CRCNS program of the National Science Foundation under award number DMS-2113213. The author acknowledges support of the Institut Henri Poincaré (UAR 839 CNRS-Sorbonne Université), and LabEx CARMIN (ANR-10-LABX-59-01).
\end{funding}
\newpage
\bibliography{biblio_cFIAP}
\bibliographystyle{abbrv}

\end{document}